\documentclass [a4paper,11pt,oneside ]{amsproc}
\usepackage{graphicx}
\usepackage{amsfonts}
\usepackage{amsmath}
\usepackage{parskip}
\usepackage{color}

\RequirePackage{srcltx}

\numberwithin{equation}{section}
\renewcommand\r{\right\rangle}
\renewcommand\l{\left\langle}

\newcommand\dsize{\displaystyle}
\newcommand\supp{\operatorname{supp}}

\newcommand\cal{\mathcal}

\newcommand \C{\mathbb{C}}
\newcommand\Q{\mathbb{Q}}
\newcommand\Z{\mathbb{Z}}
\newcommand\N{\mathbb{N}}
\newcommand\F{{\bf F}}

\newcommand\T{{ \bf T}}
\newcommand\G{{\mathcal{G}}}

 \newcommand\M{{\bf M}}

\newtheorem{Thm}{Theorem}[section]
\newtheorem{Lemma}[Thm]{Lemma}
\newtheorem{Cor}[Thm]{Corollary}
\newtheorem{Prop}[Thm]{Proposition}
\theoremstyle{remark}
\newtheorem{Rem}{Remark}[section]

 \newcommand {\rank}{{\rm Rank}\,}
 \newcommand {\diag}{{\rm Diag}\,}

\begin{document}
	
	\title[]{   Block-equivalent finite Gabor frames  }
	
	\author{Oleg Asipchuk }
	\address{O.~Asipchuk, University of Cincinnati, 
		Department of Mathematics,
		Cincinnati, OH 45221, USA}
	
	\author{Laura De Carli}
	\address{L.~De Carli, Florida International University,
		Department of Mathematics,
		Miami, FL 33199, USA}
	%
	\author{Luis Rodriguez}
	\address{L.~Rodriguez, Florida International University,
		Department of Mathematics,
		Miami, FL 33199, USA}
	
	\subjclass[2020]{Primary:  42C15, 
    15B05. 
	Secondary:~11B75. 
    }

 \begin{abstract}
We study finite systems of vectors whose frame operator matrices are unitarily equivalent, via explicit and computationally efficient unitary transformations, to block-diagonal matrices. We call such systems {\it block-equivalent}. 

We show that a Gabor system $\G=\G(g,L\times K)\subset \mathbb C^N$ is block-equivalent when  either the modulation set $L$ or the translation set $K$ is a subgroup of $\mathbb Z_N$. We also characterize situations in which the frame operator matrix becomes diagonal.

Finally, we show that geometric conditions on subsets of $\mathbb Z_N$ force certain diagonals of the frame operator matrix of $\G$ to vanish, yielding additional sparsity and block structures.
	  
\end{abstract}

 \maketitle

\section{Introduction}	

In finite dimensions, reconstructing a vector from its frame coefficients typically relies on the inverse of the frame operator. Given a frame $\mathcal{F}=\{f_i\}_{i\in J}$ with frame operator matrix $S$, the {\it canonical dual frame} $\{g_i=S^{-1} f_i\}_{i\in J}$, defined using the inverse of the frame operator matrix of $\mathcal{F}$, satisfies the reconstruction formula:

\begin{equation}\label{rec-formulas}
	x=\sum_{i\in J} \langle x,f_i\rangle g_i \;=\; \sum_{i\in J} \langle x,g_i\rangle f_i,
\end{equation}
for all $x \in \C^N$, a property shared by all dual frames. Computing $S^{-1}$ can be computationally challenging, especially in high-dimensional spaces.

Tight frames are often considered ideal because their frame operator is a scalar multiple of the identity making inversion trivial. This structure allows for reconstruction formulas analogous to those of orthonormal bases while retaining the redundancy and robustness that make frames valuable in applications. Unfortunately, tight frames rarely arise in practice, and even scalable frames (those that can be made tight by appropriately rescaling the frame vectors) are difficult to construct.

The main advantage of a tight frame is the easy inversion of its frame operator; however, also frames whose  frame operator matrix  is diagonal or block-diagonal can easily be inverted.

In this paper, we study finite systems of vectors (not just frames) whose frame operator matrices can be transformed into block-diagonal matrices using explicit and computationally efficient unitary transformations. We introduce the following definitions:

A finite set of vectors ${\cal F}\subset \C^N$ is {\it block-equivalent} if its frame operator matrix is unitarily similar to a block-diagonal matrix via an explicit, computationally trivial unitary transformation (such as a permutation matrix or a generalized discrete Fourier transform matrix). Two matrices $A$ and $B$ are unitarily similar if there exists a unitary matrix $U$ such that $B=U^*AU$, where $U^*$ is the conjugate transpose of $U$.

If this explicit transformation yields a diagonal matrix, we call the set  {\it diagonal-equivalent}; additionally, if the frame operator matrix is   block-diagonal (respectively, {\it diagonal}), we refer to it as a {\it block-diagonal set} (respectively, {\it diagonal set}).

Because the frame operator is self-adjoint, it is unitarily similar to a diagonal matrix.  However, computing a diagonalizing transformation generally requires finding eigenvalues and eigenvectors of the frame operator, which is computationally expensive. True computational efficiency is achieved only when the transformation is explicitly known and easy to apply.

Frames that are unitarily equivalent to a block-diagonal matrix via an explicit transformation can be used to produce  efficient reconstruction formulas. To see   how this is achieved, let $S$ be the frame operator matrix of a frame  ${\cal F} = \{f_i\}_{i\in J}$. If $S = U^*BU$, where $U$ is a known unitary matrix and $B$ is a block-diagonal matrix, then $S^{-1} = U^* B^{-1}U$. Substituting this into the reconstruction formula \eqref{rec-formulas} yields $$x = \sum_{i\in J} \langle x, f_i\rangle U^* B^{-1} U f_i.$$ Multiplying by $U$ and utilizing the unitary property $\langle x, f_i \rangle = \langle Ux, Uf_i \rangle$, we obtain an efficient reconstruction formula for the transformed vector $y = Ux$:
$$ y = \sum_{i\in J} \langle y, U f_i\rangle B^{-1} (U f_i). $$

$B^{-1}$ can be computed efficiently by simply inverting its individual blocks. Thus, we can efficiently reconstruct $y$ using the transformed frame $\{Uf_i\}_{i\in J}$, and subsequently recover $x = U^*y$.

In this paper  we focus on finite Gabor systems and investigate the specific conditions, such as lattice structure and window vector construction, that make them block-equivalent or diagonal-equivalent. Finite Gabor frames represent vectors in $\C^N$ as linear combinations of time-frequency shifts of a window vector $g \in \C^N$, and play a central role in finite-dimensional time-frequency analysis.

Finite Gabor frames may be viewed as finite-dimensional discretizations of classical Gabor systems on $L^2(\mathbb R)$, obtained by replacing continuous translations and modulations with cyclic time-frequency shifts on $\mathbb Z_N$  \cite{Orr, Jansen, Sonder}. See also the excellent \cite{S-thesis},    the recent \cite{5auth}  and the references cited there.

Let $\zeta_N = e^{2\pi i/N}$ denote the $N$-th primitive root of unity. For indices $k, \ell \in \Z_N$, we define the cyclic translation operator $T_k$ and the modulation operator $M_\ell$ by their action on a vector $v \in \C^N$:
\begin{equation}\label{e-transl-mod}(T_k v)[j] = v[j-k] \quad \text{and} \quad (M_\ell v)[j] = \zeta_N^{\ell j} v[j] \end{equation}
for $j = 0, 1, \dots, N-1$. By cyclic translation, we mean that the indices are taken modulo $N$. For a non-zero window vector $g \in \C^N$ and a time-frequency set $\Lambda \subset \Z_N \times \Z_N$, a finite Gabor system is the set of time-frequency shifts of $g$:
\begin{equation}\label{e-Gabor-sys} \G(g, \Lambda) = \{M_\ell T_k g : (\ell, k) \in \Lambda\}. \end{equation}

When the time-frequency set is a Cartesian product $\Lambda = L \times K$ for subsets $L, K \subset \Z_N$, we refer to $L$ as the set of modulations and $K$ as the set of translations.

In general, verifying whether a given finite Gabor system constitutes a frame is a nontrivial problem \cite{ Malikiosis, PfanderFinite}. However, reducing the frame operator to a block-diagonal form significantly simplifies this analysis, as the invertibility of the system reduces to verifying the invertibility of its individual smaller blocks. 

The following theorem constitutes the main result of this paper. 

\begin{Thm} \label{T-Main}
	Let $\G=\G  (g,\, L \times K)$ be a finite Gabor system in $\mathbb{C}^N$. If either the modulation set $L$ or the translation set $K$ is a subgroup of $\Z_N$, then $\G$ is block-equivalent.
\end{Thm}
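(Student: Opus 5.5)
The plan is to compute the entries of the frame operator matrix $S=\sum_{(\ell,k)\in L\times K} (M_\ell T_k g)(M_\ell T_k g)^*$ directly. I will treat the case where $L$ is a subgroup by exhibiting a permutation matrix that block-diagonalizes $S$. I will then reduce the case where $K$ is a subgroup to the first case by conjugating with the discrete Fourier transform, which interchanges translations and modulations.

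\emph{Case $L$ a subgroup.} From \eqref{e-transl-mod}, $(M_\ell T_k g)[j]=\zeta_N^{\ell j} g[j-k]$. Hence
\begin{equation*}
S[j,j']=\Big(\sum_{k\in K} g[j-k]\,\overline{g[j'-k]}\Big)\Big(\sum_{\ell\in L}\zeta_N^{\ell(j-j')}\Big).
\end{equation*}
Every subgroup of $\Z_N$ is cyclic of the form $L=\{0,\tfrac Nd,2\tfrac Nd,\dots,(d-1)\tfrac Nd\}$ for some divisor $d$ of $N$. The second factor is then a geometric sum of $d$-th roots of unity, namely $\sum_{m=0}^{d-1}\zeta_d^{m(j-j')}$. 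It equals $d$ when $j\equiv j' \pmod d$ and $0$ otherwise. Thus $S[j,j']=0$ unless $j\equiv j'\pmod d$.

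Let $P$ be the permutation matrix that reorders the indices $0,\dots,N-1$ by residue class modulo $d$, listing first $0,d,2d,\dots$, then $1,d+1,\dots$, and so on. Then $P^*SP$ is block-diagonal with $d$ blocks of size $N/d$. The $r$-th block has entries $d\sum_{k\in K} g[r+md-k]\,\overline{g[r+m'd-k]}$. This shows $\G$ is block-equivalent.

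\emph{Case $K$ a subgroup.} Let $F$ be the unitary DFT matrix, $(Fv)[m]=N^{-1/2}\sum_j v[j]\zeta_N^{-mj}$, and write $\hat g=Fg$. The standard commutation relations are $FT_k=M_{-k}F$ and $FM_\ell=T_\ell F$. Together with $T_\ell M_{-k}=\zeta_N^{k\ell}M_{-k}T_\ell$, they give
\begin{equation*}
F M_\ell T_k g=c_{\ell,k}\, M_{-k}T_\ell \hat g \qquad\text{with } |c_{\ell,k}|=1.
\end{equation*}
The unimodular constants disappear in the rank-one terms $vv^*$. Therefore $FSF^*$ is exactly the frame operator of the Gabor system $\G(\hat g,(-K)\times L)$. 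Since $-K=K$ is a subgroup, it plays the role of the modulation set, and the first case yields a permutation $P$ with $P^*FSF^*P$ block-diagonal. The explicit unitary is $U=F^*P$, a generalized Fourier transform matrix. This is computationally trivial in the sense of the definition, so $\G$ is block-equivalent.

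The only delicate point is the second case. There I must check carefully that the phase factors arising from the noncommutativity of $T$ and $M$ really cancel in the frame operator. I must also check that the roles of the sets are swapped correctly: $K$ becomes the modulation set, up to sign. The rest is the elementary root-of-unity orthogonality computation.
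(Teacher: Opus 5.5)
Your proof is correct. Case 1 is essentially the paper's argument. You read off the sparsity pattern $j\equiv j' \pmod d$ directly, and since that is already an equivalence relation you do not need the general Lemma \ref{L-main-block}.

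Case 2 takes a genuinely different route. The paper stays in the time domain. Lemma \ref{L-trans-block-circ} gives $\T_K[i+M,j+M]=\T_K[i,j]$ with $M=N/p$, and the circulant $\M_L$ has the same shift invariance. Hence $S_\G=\M_L\odot\T_K$ is block-circulant, a $p\times p$ array of $M\times M$ blocks, and the block Fourier transform $\F_p\otimes I_M$ of Proposition \ref{T-Block-FT} block-diagonalizes it.

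You instead use time--frequency duality to turn $K$ into a modulation set and then reuse Case 1. The paper uses this device only for Corollary \ref{full-sampling}(b). You also carry it out more carefully than the paper's \eqref{e-equiv}. With this DFT convention the correct intertwining is $FT_k=M_{-k}F$. The resulting phases $\zeta_N^{k\ell}$ are harmless because the frame operator is a sum of rank-one terms $vv^*$, and the sign is harmless because $-K=K$.

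What your route buys:
\begin{itemize}
\item A single mechanism handles both cases.
\item The blocks are explicit in terms of the window's Fourier transform, namely $p\sum_{\ell\in L}\hat g[s+mp-\ell]\,\overline{\hat g[s+m'p-\ell]}$ for $s\in\Z_p$ and $0\le m,m'<M$. So the frame property can be checked blockwise exactly as in Case 1.
\end{itemize}

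What the paper's route buys:
\begin{itemize}
\item A lighter transform: only a $p$-point DFT across blocks instead of a full $N$-point DFT.
\item The structural fact that $S_\G$ itself is block-circulant.
\end{itemize}

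The two are consistent. By the Cooley--Tukey factorization, $P^*F$ equals $\F_p\otimes I_M$ (up to the sign convention in the exponent) followed by a block-diagonal unitary built from twiddle factors and $M$-point DFTs. So, up to relabeling, your blocks are unitarily similar to the paper's blocks $B_s$ one by one.
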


The explicit structure of the frame operator matrix in these cases, including the precise composition of the blocks and the exact unitary transformations required to achieve them, is formalized as a corollary in Section 5.

When either the set of modulations or the set of translations consists of the entire group $\Z_N$, the Gabor system becomes diagonal or diagonal-equivalent:

\begin{Cor}\label{full-sampling}
	Let $\G = \G(g, L \times K)$ be a finite Gabor system in $\mathbb{C}^N$.
	
	a) (Full set of modulations) If $L = \Z_N$, then  the frame operator matrix of $\G$ is   diagonal. Furthermore, $\G$ is a frame for $\C^N$ if and only if $\sum_{k \in K} |g[i-k]|^2 > 0$ for all $i = 0, 1, \dots, N-1$.
	
	b) (Full set of translations) If $K = \Z_N$, then $\G$ is diagonal-equivalent. Furthermore, $\G$ is a frame for $\C^N$ if and only if $\sum_{\ell \in L} |\hat{g}[j-\ell]|^2 > 0$ for all $j = 0, 1, \dots, N-1$.
\end{Cor}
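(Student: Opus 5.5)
The plan is to compute the frame operator matrix $S=\sum_{(\ell,k)\in L\times K} (M_\ell T_k g)(M_\ell T_k g)^*$ entrywise and exploit orthogonality of characters. Writing $f_{\ell,k}[i]=\zeta_N^{\ell i}g[i-k]$, the $(i,j)$ entry is
$$S[i,j]=\sum_{k\in K} g[i-k]\overline{g[j-k]}\;\sum_{\ell\in L}\zeta_N^{\ell(i-j)}.$$
For part a), take $L=\Z_N$. The inner sum equals $N\delta_{i,j}$, so $S$ is diagonal with entries $N\sum_{k\in K}|g[i-k]|^2$. A diagonal Hermitian positive semidefinite matrix is invertible exactly when every diagonal entry is positive. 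Since $\G$ is a frame precisely when $S$ is invertible (equivalently, when the vectors span $\C^N$), this gives the stated criterion.

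For part b), take $K=\Z_N$. The approach is to move to the Fourier side with the unitary DFT $\mathcal F$, $\hat v[m]=N^{-1/2}\sum_j v[j]\zeta_N^{-jm}$. We use the standard intertwining relations $\widehat{T_k v}=M_{-k}\hat v$ and $\widehat{M_\ell v}=T_\ell \hat v$, so that
$$\widehat{M_\ell T_k g}=T_\ell M_{-k}\hat g=\zeta_N^{k\ell}\,M_{-k}T_\ell \hat g.$$
The unimodular phase factors do not change the rank-one projections $ff^*$. Hence $\mathcal F S\mathcal F^*$ is the frame operator of the system $\{M_{-k}T_\ell\hat g:\ k\in\Z_N,\ \ell\in L\}$. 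In this system the roles are swapped: the full group $\Z_N$ now indexes the modulations (since $-\Z_N=\Z_N$), and $L$ indexes the translations of the window $\hat g$.

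Applying part a) to this transformed system, $\mathcal F S\mathcal F^*$ is diagonal with entries $N\sum_{\ell\in L}|\hat g[j-\ell]|^2$. So $\G$ is diagonal-equivalent via the explicit DFT. Because unitary similarity preserves invertibility, the frame criterion follows in the same way as in part a).

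The only delicate step is bookkeeping: fixing the normalization and sign conventions of $\hat g$ so that the final formula reads $\hat g[j-\ell]$ rather than $\hat g[j+\ell]$. This is a convention check about which DFT the paper uses; if needed, I would replace $L$ by $-L$ or use $\mathcal F^*$ instead of $\mathcal F$. Everything else is a direct computation.
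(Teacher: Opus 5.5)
Your proposal is correct and follows essentially the same route as the paper. Part a) comes from the Hadamard factorization $S=\M_L\odot\T_K$ with $\M_L=N I_N$ when $L=\Z_N$. Part b) passes to $\F_N\G$ and applies part a) with the roles of $L$ and $\Z_N$ swapped.

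Your handling of the phase factors $\zeta_N^{k\ell}$ and of the sign in $\widehat{T_k v}=M_{-k}\hat g$ is more careful than the paper's. With the paper's DFT convention your computation already gives $\hat g[j-\ell]$, so you do not need to replace $L$ by $-L$.
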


Here $\hat{g}$ denotes the discrete Fourier transform of the window vector $g$.   We have recalled the definition and the main properties of the discrete Fourier transform in Section 2.2.

  Corollary \ref{full-sampling}  shows that  the frame operator matrix of $\G$ is diagonal    when  the set of modulations $L$ is  the entire group $\Z_N$. When $L$ is a proper subgroups of $\Z_N$,   suitable window vectors can still make the frame operator matrix diagonal, yielding optimal computational efficiency:

\begin{Thm}\label{T-Interlace}
	Let $\G=\G (g,\, L \times K)$ be a Gabor system where $L$ is a subgroup of order $r$  and $K$ is a subgroup of order $p>1$. Let $M = \frac Np$.
	
	Let $\{h_0, h_1, \dots, h_{M-1}\}$ be a set of $M$ mutually orthogonal vectors in $\mathbb{C}^p$. Define the window vector $g \in \mathbb{C}^N$ by interlacing these vectors as follows: 
	$$ g[k + lM] =  h_k[l], \quad 0 \le l < p $$ 
	for all $0 \le k < M$.  
	Then, the frame operator matrix $S$ is  diagonal if and only if 
	$ \dsize \gcd\left(p, \frac{N}{r}\right) = 1. $  
\end{Thm}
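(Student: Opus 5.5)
The plan is to compute the entries of the frame operator matrix $S=\sum_{(\ell,k)\in L\times K} (M_\ell T_k g)(M_\ell T_k g)^*$ explicitly. Then I would read off exactly which off-diagonal entries survive. By \eqref{e-transl-mod},
$$S[i,j]=\Big(\sum_{\ell\in L}\zeta_N^{\ell(i-j)}\Big)\Big(\sum_{k\in K} g[i-k]\,\overline{g[j-k]}\Big).$$
Since $L$ is the subgroup of order $r$, we have $L=\frac Nr\Z_N$. Its character sum equals $r$ when $r\mid (i-j)$ and $0$ otherwise. Since $K$ has order $p$, we have $K=M\Z_N=\{Mt: 0\le t<p\}$. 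So $S$ is diagonal if and only if the $K$-sum vanishes for every pair $i\neq j$ with $r\mid(i-j)$.

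Next I would use the interlaced structure. Write $i=a+uM$ and $j=b+vM$ with $0\le a,b<M$ and $0\le u,v<p$. Then $g[i-Mt]=h_a[u-t]$, with the index taken mod $p$. Hence
$$\sum_{k\in K} g[i-k]\overline{g[j-k]}=\sum_{s\in\Z_p} h_a[s]\,\overline{h_b[s+v-u]},$$
which is the cyclic cross-correlation of $h_a$ and $h_b$ at shift $v-u$. At shift $0$ this is $\langle h_a,h_b\rangle$. It vanishes when $a\neq b$ by orthogonality, and it equals $\|h_a\|^2$ when $a=b$, which gives the diagonal.

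The pairs with $a=b$ and $u\neq v$ are exactly the pairs $i\neq j$ with $i\equiv j \pmod M$. For such pairs the entry is an autocorrelation at a nonzero shift, which in general is nonzero. So the key arithmetic step is to show that no such pair also satisfies $r\mid(i-j)$. Equivalently, $\mathrm{lcm}(M,r)=N$. I would verify this prime by prime: with $\nu_q$ the $q$-adic valuation, $\max(\nu_q(N)-\nu_q(p),\nu_q(r))=\nu_q(N)$ holds exactly when $\nu_q(p)=0$ or $\nu_q(r)=\nu_q(N)$. That is the condition $\min(\nu_q(p),\nu_q(N/r))=0$, i.e. $\gcd(p,N/r)=1$. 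This gives the "only if" direction. If $\gcd(p,N/r)>1$, then $d=\mathrm{lcm}(M,r)$ is a proper divisor of $N$, and the entries $S[a+uM,\,a+uM+d]$ are $r$ times an autocorrelation of $h_a$ at the nonzero shift $d/M$. I would then exhibit an admissible orthogonal family where this is nonzero. For example, I would take $h_a$ to be standard basis vectors padded or perturbed, or simply a generic orthogonal family. This reflects that "iff" must be read as a statement about the general window.

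The main obstacle is the remaining "if" case: pairs with $a\neq b$, $u\neq v$ and $r\mid (a-b)+(u-v)M$. There the entry is a cross-correlation at a nonzero shift, and orthogonality of the $h_a$ alone does not obviously kill it. I would first try to show that under $\gcd(p,N/r)=1$ the divisibility constraint forces $u=v$ whenever $a\neq b$, perhaps by reducing $i-j$ modulo $M$ and modulo $r$ separately. If that fails, I would reinterpret the hypothesis on $\{h_k\}$ as orthogonality of all cyclic shifts of the $h_k$, as in the natural examples where the $h_k$ have disjoint supports. I would state explicitly which version of the hypothesis the argument uses.
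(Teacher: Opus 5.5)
The case you single out as the main obstacle is a genuine gap, and it cannot be closed. Your first plan (showing that $r\mid i-j$ forces $u=v$ when $a\neq b$) fails, and in fact the ``if'' direction is false as stated. Take $N=6$, $p=r=3$, $M=2$, $L=K=\{0,2,4\}$, and the orthogonal vectors $h_0=(1,0,0)$, $h_1=(0,1,0)$ in $\C^3$, so that $g=(1,0,0,1,0,0)$. Then $\gcd(p,N/r)=\gcd(3,2)=1$, yet
\[
S[3,0]=\Big(\sum_{\ell\in L}\zeta_6^{3\ell}\Big)\Big(\sum_{k\in K}g[3-k]\,\overline{g[-k]}\Big)=3\cdot g[3]\,\overline{g[0]}=3\neq 0 .
\]
In your notation this is $i=3$ ($a=1,u=1$) and $j=0$ ($b=0,v=0$): a cross-correlation of $h_1$ and $h_0$ at a nonzero shift, lying on a diagonal divisible by $r$.

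The paper's proof is the same lcm argument as yours. Your valuation check of $\mathrm{lcm}(M,r)=N\iff\gcd(p,N/r)=1$ is actually cleaner, since the paper only derives one implication. But the paper disposes of these entries through Lemma~\ref{L-trans-sparsity}, which asserts that $\T_K$ vanishes off the diagonals that are multiples of $M$. Its proof claims that $\T_K$ is determined by its top-left $M\times M$ block. However, the periodicity $\T_K[i+M,j+M]=\T_K[i,j]$ of Lemma~\ref{L-trans-block-circ} only propagates the first block row. The off-diagonal blocks contain exactly the correlations $\sum_s h_a[s]\overline{h_b[s+v-u]}$ with $u\neq v$, and orthogonality does not control these when $a\neq b$.

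Your fallback is the right repair, but it must be formulated correctly. You need $\langle h_a, T_s h_b\rangle=0$ for all $s\in\Z_p$ and all $a\neq b$. Since this correlation is, up to normalization, the DFT of $\hat h_a\overline{\hat h_b}$, the condition is equivalent to the DFTs $\hat h_a$ having pairwise disjoint supports. Disjoint supports of the $h_a$ themselves are not enough; the counterexample above has them. Under the corrected hypothesis, $\T_K[i,j]=0$ whenever $i\not\equiv j\pmod M$, and your argument then proves the ``if'' direction.

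As you suspected, ``only if'' is only generic. It requires some $h_a$ with nonzero autocorrelation at a nonzero multiple of the shift $\mathrm{lcm}(M,r)/M$. Unperturbed standard basis vectors are exactly the wrong choice: for $N=4$, $p=r=2$ and $h_k=e_k$, one gets $g=(1,0,0,1)$ and $S=2I$, although $\gcd(p,N/r)=2$.
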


 The interlacing construction in Theorem \ref{T-Interlace} requires finding $M$ mutually orthogonal vectors in $\mathbb{C}^p$. This construction is only possible when $M \le p$, which imposes the necessary dimensional constraint $N \le p^2$.

When  the modulation set $L$ is not a subgroup of $\Z_N$, the  frame operator becomes significantly more complex; However,  if $L$ satisfies certain geometric conditions, we can still guarantee that   the frame operator matrix is sufficiently sparse. We discuss this in section 4.
   
 \medskip
Our paper is organized as follows. Section 2 introduces the definitions and preliminary results required for our proofs. In Section 3, we establish the structural lemmas that characterize the structure of the frame operator matrix  of a  Gabor system  $\G=\G (g,\, L \times K)$ whenever either the modulation set $L$ or the translation set $K$ is a subgroup of $\Z_N$. In Section 4 we  utilize cyclotomic polynomials and tiling conditions to demonstrate how the arithmetic structure of the modulation set forces certain diagonals of the frame operator  matrix to vanish. Finally, in Section 5, we provide the proofs of our main theorems and corollaries.

\section{Notation and Preliminaries}

 Let $N > 1$ be an integer. We denote vectors in $\C^N$ as  $v=(v_0,\, v_1, ..., v_{N-1})$; when   indices are understood modulo $N$, we use the notation   $v = (v[0], v[1], \dots, v[N-1])$.   We  denote with $\l v, \, w\r = \sum_{j=0}^{N-1} v_j \overline{w_j}$   the inner product of vectors  $v$,  $w\in\C^N$, and  with  $\|v\| = \sqrt{\l v, v \r}$ the standard norm of $v$.

Given a matrix $A = \{a_{i,j}\}_{i,j=0}^{N-1}$ and an integer $s\in~ [-N+1, N-1] \cap \Z$, we define the $s^{th}$ diagonal of $A$ as:
$$ D_A(s) := \{a_{i,j} : j-i = s\} $$ 
The matrix $A$ has $2N-1$ diagonals; $D_A(0) = \{a_{j,j}\}_{j=1}^{N}$ represents the main diagonal, and the diagonals $D_A(N-1)$ and $D_A(-N+1)$ consist of the elements $a_{0,N-1}$ and $a_{N-1,0}$, respectively.

We denote block-diagonal matrices by $B = \diag(B_1, \dots, B_k)$, where  each  $B_j\in \C^{s_j\times s_j}$ 
is a diagonal block. 

\subsection{ Basic frame theory} We have used the textbooks \cite{Christensen, Heil} for the definitions and the properties of frames  mentioned  in this section.

A set ${\cal F}=\{v_j\}_{j\in J}$ in a Hilbert space $ H $ is a {\it frame} if there exist constants $A,\ B>0$ for which the inequality 
\begin{equation}\label{frame-def-ineq}
	A\|x\|^2 \leq	\sum_{j\in J}    |\l x, v_j\r|^2 \leq B\|x\|^2   
\end{equation}
holds for every $x\in H$.  
The inequality \eqref{frame-def-ineq} is called the {\it frame inequality} and the constants $A, B$ are the {\it frame constants} of ${\cal F}$. When $A=B$, we say that the frame is {\it tight.} 

Let ${\cal F}=\{v_j\}_{j=1}^m\subset\mathbb{C}^N$ be a finite set  of vectors, and let $\mathcal E=\{e_j\}_{j=1}^N$ denote the standard orthonormal basis of $\mathbb{C}^N$.  

The {\it frame operator} of ${\cal F}$ is $ S_{\mathcal F}:\mathbb{C}^N\to\mathbb{C}^N$,  
$$ \dsize  S_{\mathcal F}\, x = \sum_{k=1}^m \langle x, v_k\rangle\, v_k. $$ 

The set $\mathcal F$ is a frame if and only if $S$ is invertible, and the optimal  frame constants of $\mathcal F$ are the maximum and minimum eigenvalue of $S$. 

We use the notation  $S_{\mathcal F}$ also to denote  the matrix of the frame operator  $S_{\mathcal F}$ (or just  "the   frame matrix " for simplicity).  
Recall that   $S_{\mathcal F} =FF^{*}$, where $F$ is the  matrix whose columns are  the vectors of the frame, also called  the {\it synthesis matrix} of the frame.


\subsection{ 
	The discrete Fourier transform} 

Given a vector \newline
$ 
x=(x_0,\, x_1, ..., x_{N-1}) \in\mathbb{C}^N,
$ 
its normalized discrete Fourier transform  (DFT) is the vector $ \hat x= (\hat x_0,\hat x_1,\dots,\hat x_{N-1})  \in \mathbb{C}^N$  defined by
\begin{equation}\label{eq:dft-def}
	\hat x_k = \frac{1}{\sqrt N}\sum_{j=0}^{N-1} x_j\, e^{-\frac{2\pi i kj}{N} }, 
	\qquad k=0,1,\dots,N-1.
\end{equation}

The transformation \eqref{eq:dft-def} is linear, and can be expressed as a matrix multiplication.  
The   normalized \emph{DFT matrix} is the  $N\times N$ matrix
$$
\F_N := \frac{1}{\sqrt N}\big( \zeta_N^{-kj} \big)_{k,j=0}^{N-1},
$$
where  
$
\zeta_N = e^{  \frac{2\pi i}{N}}.
$
Thus, for every $x\in\C^N$, we   have that $\hat x= \F_Nx$.

A straightforward computation shows that
$
\F_N^{*} \F_N =   I_N,
$
where $\F_N^{*}$ denotes the Hermitian transpose of $\F_N$, and $I_N$ is the $N\times N$ identity matrix.   Thus, the DFT is a unitary transformation that preserves  inner products and norms.

It is well known that the   modulations of a vector $x$  circularly shift  its  DFT in frequency, and vice versa. That is,  
	for every $x\in\C^N$ and every $k_0,\, n_0\in\Z_N$, we have
	$$
	\F_N M_{k_0} x= T_{k_0}\F_Nx, \quad \F_N T_{n_0} x= M_{n_0}\F_Nx.
	$$
Since $\F_N$ is unitary,  a  Gabor system $\G=\G(g,\,L\times K) $ is a frame  for $\C^N$ if and only if the same is true of 
\begin{equation}\label{e-equiv} \F_N\G=  \G(\hat g,\, K \times L).\end{equation}

The following proposition relates the frame property of the  Gabor system $\G=\G(g,L\times K)$ to the support  of the window vector $g$ and its Fourier transform.
  
 \begin{Prop}\label{P-rank}
 	Let $\G=\G(g,\,L\times K)\subset \C^N$. 
 	%
 	If  either \begin{enumerate}
 		\item[(a)] $g$ has fewer than $\frac{N}{|K|}$ nonzero components, or
 		
 		\item[(b)] $\hat g=\F_Ng$ has fewer than $\frac{N}{|L|}$ nonzero components,
 	\end{enumerate}
 	
 	then $\G$ is not a frame for $\C^N$.

 \end{Prop}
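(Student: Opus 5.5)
We argue by a rank bound on the synthesis matrix. A frame for $\C^N$ must span $\C^N$, so its synthesis matrix has rank $N$. We show that under (a) the span of $\G$ has dimension strictly less than $N$. Case (b) then follows from case (a) applied to the Fourier side.

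For (a), fix $k\in K$ and consider the family $\{M_\ell T_k g:\ \ell\in L\}$. Every vector in this family is supported on $\supp(T_k g)=k+\supp(g)$, because modulation multiplies entrywise by unimodular factors and so leaves the support unchanged. These vectors therefore lie in the coordinate subspace $V_k=\operatorname{span}\{e_j: j\in k+\supp g\}$, whose dimension is $|\supp g|$. Hence
$$\operatorname{span}\G\subset \sum_{k\in K} V_k,\qquad \dim \operatorname{span}\G\le \sum_{k\in K}\dim V_k = |K|\,|\supp g|.$$
The hypothesis $|\supp g|<N/|K|$ gives $|K|\,|\supp g|<N$. So $\G$ does not span $\C^N$, its frame operator $S=FF^*$ is singular, and $\G$ is not a frame.

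For (b), use the intertwining relations $\F_N M_\ell=T_\ell\F_N$ and $\F_N T_k=M_k\F_N$. These give $\F_N M_\ell T_k g = T_\ell M_k \hat g = \zeta_N^{-\ell k} M_k T_\ell \hat g$. Up to unimodular scalars, which do not affect the span or the frame property, $\F_N\G$ is therefore the system $\G(\hat g, K\times L)$, as in \eqref{e-equiv}. In this system $L$ plays the role of the translation set. Since $\F_N$ is unitary, $\G$ is a frame if and only if $\F_N\G$ is. Applying (a) to $\G(\hat g,K\times L)$ with the hypothesis $|\supp \hat g|<N/|L|$ shows that $\F_N\G$, and hence $\G$, is not a frame.

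There is no real obstacle here. The only point needing care is the phase factor $\zeta_N^{-\ell k}$ that arises when commuting $T$ and $M$. It must be noted explicitly that such scalar factors do not change the span, which justifies using \eqref{e-equiv} in this form.
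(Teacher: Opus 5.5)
Your proof is correct and is essentially the paper's. The paper bounds the support union, $\bigl|\bigcup_{k\in K}(\supp g+k)\bigr|\le |K|\,|\supp g|<N$, which is the same count as your $\dim\sum_k V_k$, and it reduces (b) to (a) via \eqref{e-equiv}.

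There is one harmless slip, which the paper shares. With this DFT convention, $\F_N T_k=M_{-k}\F_N$, so $\F_N\G$ is, up to phases, $\G(\hat g,(-K)\times L)$. The modulation set plays no role in (a), so your conclusion is unaffected.
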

 
 \begin{proof}  By \eqref{e-equiv}, it  suffices to prove part (a).
 Let
 $$
  \supp(g) :=\{j\in \mathbb Z_N:\ g[j]\neq 0\}.
 $$ 
Assume that
 $ 
 |\supp(g)|<\frac{N}{|K|}.
 $ 
 For each $k\in K$, the translated vector $T_k g$ is supported on the set
 $ 
 \supp(g)+k.
 $ 
Since the modulations  do  not affect the support, we can conclude that   the support of every vector of  $\G$ is in the set 
 $$
 Z=\bigcup_{k\in K}(\supp(g)+k).
 $$
Since
 $ 
 |Z|\leq |\supp(g)||K|<N, 
 $ 
 we have that  $Z$ is a proper subset of $\mathbb Z_N$, and every vector in $\G$
 vanishes outside $Z$. Consequently, the vectors in $\G$ cannot span $\C^N$,
 and so $\G$ is not a frame for $\C^N$.
 \end{proof}

More details on the  DFT can be found e.g in \cite[Chapt 7]{SteinShakarchi}.

\subsection{ Circulant and block-circulant matrices} 
 
We refer to \cite{Davis} for  the  definitions and the results on circulant matrices mentioned in this section.

Let $\vec c=(c[0],c[1],\dots,c[n-1])\in\C^n$. The $n\times n$ \emph{circulant matrix} identified by the vector $\vec c$  is
$$
C= C_{\vec c} :=
\begin{pmatrix}
	c[0] & c[n-1] & \cdots & c[1] \\
	c[1] &c[0]  & \cdots & c[2] \\
	\vdots & \vdots   & \ddots & \vdots \\
	c[n-1]  & c[n-2]  & \cdots & c[0]
\end{pmatrix}.
$$
The columns of $C$ are cyclic permutations of the vector $\vec c$. Circulant matrices are a special  Toeplitz matrices.

The  circulant matrices form a commutative algebra,  and    the Hadamard (componentwise)  product of circulant  matrices  of the same size is circulant.  Specifically, 
if 
$C$ and $D$ are $n\times n$ circulant matrices with associated vectors $\vec c$ and $\vec d$,  the matrix 
$C\odot D$ is  the circulant matrix identified by  the vector $\vec c\odot \vec d$.

A fundamental property of $n\times n$ circulant matrices is that they are diagonalized by the discrete Fourier transform matrix $\F_n  $.   
 The eigenvalues of 
 $ C_{\vec c}$  are  given by 
$$ 
\lambda_j(C)=P_{\vec c}(\zeta_n^{j}), \qquad j=0,\dots,n-1.
$$
where  
$ 
P_{\vec c}(z)=c[0]+c[1]z+\cdots+c[n-1]z^{n-1}.
$ 

An eigenvector associated to $\lambda_j$ is 
$ 
\vec v_j=\frac{1}{\sqrt n}(1,\zeta_n^{-j},\zeta_n^{-2j},\dots,\zeta_n^{-(n-1)j}).
$ 

\medskip

A natural generalization of circulant  matrices is obtained by replacing the scalar entries with matrices  and preserving the same cyclic structure at the   block level.

 Let $ m,k \in \mathbb{N} $ and let
$
B_0, B_1, \dots, B_{m-1} \in \mathbb{C}^{k \times k}.
$
The \emph{block-circulant matrix} determined by the ordered blocks $ (B_0,\dots,B_{m-1}) $ is the  $ mk \times mk $ matrix
\begin{equation}\label{e-block-circ}
	C = C(B_0,\dots,B_{m-1}) :=
	\begin{pmatrix}
		B_0 & B_{m-1} & \cdots & B_1 \\
		B_1 & B_0     & \cdots & B_2 \\
		\vdots & \vdots & \ddots & \vdots \\
		B_{m-1} & B_{m-2} & \cdots & B_0
	\end{pmatrix}.
\end{equation}

A block-circulant matrix is not, in general, circulant. However, any $ n \times n $ circulant matrix with $n = mk $ can be viewed as a block-circulant matrix with $ k \times k $ blocks.

The Hadamard product of two  block-circulant matrices  with the same block structure is  block circulant,  and so the Hadamard product of a $n\times n$ circulant matrix, with $n=mk$,   and a block-circulant matrix with blocks of size $k\times k $ is block-circulant  with blocks of size $k\times k$.

\medskip

We recall the definition of Kroneker product of   matrices:
let $A~=~(a_{ij}) \in \mathbb{C}^{m\times n}$ and
$B \in \mathbb{C}^{k\times \ell}$. The \emph{Kronecker product}
of $A$ and $B$ is the matrix
$ 
A \otimes B \in \mathbb{C}^{mk \times n\ell}
$ 
defined by
$$
A \otimes B =
\begin{pmatrix}
	a_{11}B & a_{12}B & \cdots & a_{1n}B \\
	a_{21}B & a_{22}B & \cdots & a_{2n}B \\
	\vdots & \vdots & \ddots & \vdots \\
	a_{m1}B & a_{m2}B & \cdots & a_{mn}B
\end{pmatrix}.
$$

The Kronecker product   satisfies the mixed-product property
$$
(A\otimes B)(C\otimes D) = (AC)\otimes (BD),
$$
whenever the matrix products $AC$ and $BD$ are defined.

A block-circulant matrix is unitarily equivalent to a block-diagonal matrix via a block analogue of the discrete Fourier transform defined as follows.
Let $ m,k \in \mathbb{N}  $, and let
$ 
\F _m := \frac{1}{\sqrt{m}}\big(\zeta_m^{\,j\ell}\big)_{j,\ell=0}^{m-1}
$ 
be the  DFT  matrix on $ \mathbb{C}^m $. The \emph{block Fourier transform} is the matrix
$$
\F_{m \otimes k} := \F_m \otimes I_k \in \mathbb{C}^{mk \times mk},
$$
where $ I_k $ denotes the identity matrix on $ \mathbb{C}^k $.
  The blocks of $\F_{m \otimes k} $ are scalar multiples of the identity matrix $I_k$

\medskip

The following important proposition follows from Theorem~5.6.4 in \cite{Davis}.

\begin{Prop}\label{T-Block-FT}
	The matrix $\F_{m \otimes k} $ is unitary, and it block-diagonalizes all $mk\times mk$ block-circulant matrices with $k\times k $ blocks. Specifically,
$$
	C(B_0, \dots, B_{m-1}) 
	= (\F_m^* \otimes I_k)\, \diag(D_0,\dots,D_{m-1})\, (\F_m \otimes I_k),
$$
	where
	$ \dsize
	D_\ell = \sum_{j=0}^{m-1} \zeta_m^{\,j\ell} B_j,\ $ for $\ell = 0,\dots,m-1.
$ 
\end{Prop}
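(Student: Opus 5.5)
The plan is to prove the Proposition directly from the Kronecker mixed-product property, without relying on \cite{Davis}. The idea is to write a block-circulant matrix as a sum of Kronecker products of powers of the cyclic shift with the blocks $B_j$, and then use the fact that $\F_m$ diagonalizes the cyclic shift.

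\emph{Unitarity.} With $\F_m$ as defined just before the Proposition (entries $\zeta_m^{j\ell}/\sqrt m$), a direct computation of $\sum_\ell \zeta_m^{(j-j')\ell}$ gives $\F_m\F_m^*=\F_m^*\F_m=I_m$. Since $(A\otimes B)^*=A^*\otimes B^*$, the mixed-product property yields
$$(\F_m\otimes I_k)(\F_m^*\otimes I_k)=(\F_m\F_m^*)\otimes I_k=I_{mk},$$
and similarly in the other order. So $\F_{m\otimes k}$ is unitary.

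\emph{Decomposition.} Let $P\in\C^{m\times m}$ be the cyclic shift $Pe_s=e_{s+1}$, with indices modulo $m$. Then $(P^j)_{r,s}=1$ exactly when $r-s\equiv j \pmod m$, and is $0$ otherwise. By \eqref{e-block-circ}, the $(r,s)$ block of $C(B_0,\dots,B_{m-1})$ is $B_{r-s \bmod m}$. Hence
$$C(B_0,\dots,B_{m-1})=\sum_{j=0}^{m-1}P^j\otimes B_j .$$

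\emph{Diagonalizing the shift.} Next compute $\F_mP\F_m^*$. Its $(a,b)$ entry is
$$\frac1m\sum_{r,s}\zeta_m^{ar}P_{r,s}\zeta_m^{-bs}=\frac1m\sum_s\zeta_m^{a(s+1)-bs}=\zeta_m^{a}\,\delta_{ab}.$$
So $\F_mP\F_m^*=\Lambda:=\diag(1,\zeta_m,\dots,\zeta_m^{m-1})$, and therefore $\F_mP^j\F_m^*=\Lambda^j$.

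\emph{Conclusion.} Applying the mixed-product property term by term,
$$(\F_m\otimes I_k)\,C\,(\F_m^*\otimes I_k)=\sum_j(\F_mP^j\F_m^*)\otimes B_j=\sum_j\Lambda^j\otimes B_j .$$
This matrix is block diagonal, and its $\ell$-th diagonal block is $\sum_j\zeta_m^{j\ell}B_j=D_\ell$. Multiplying on the left by $\F_m^*\otimes I_k$ and on the right by $\F_m\otimes I_k$, and using unitarity, gives the stated identity.

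There is no real obstacle here. The only point needing care is sign and index bookkeeping: with this paper's convention for $\F_m$ (positive exponent, opposite to the DFT matrix $\F_N$ of Section 2.2), the computation must produce $\Lambda$ with entries $\zeta_m^{+\ell}$, so that $D_\ell$ carries $\zeta_m^{j\ell}$ rather than $\zeta_m^{-j\ell}$. I would verify this with the explicit entry computation above rather than quoting a standard formula.
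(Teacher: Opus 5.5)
Your proof is correct. The paper gives no argument of its own for this proposition; it only cites Theorem~5.6.4 in Davis's book on circulant matrices. What you supply is a self-contained proof along the standard lines, and every step checks out:
\begin{itemize}
\item by \eqref{e-block-circ} the $(r,s)$ block of $C(B_0,\dots,B_{m-1})$ is $B_{r-s}$, so $C=\sum_j P^j\otimes B_j$;
\item the entrywise computation $(\F_mP\F_m^*)_{a,b}=\zeta_m^{a}\delta_{ab}$ is right for the paper's positive-exponent $\F_m$;
\item the mixed-product property turns $(\F_m\otimes I_k)\,C\,(\F_m^*\otimes I_k)$ into $\sum_j\Lambda^j\otimes B_j$, whose $\ell$-th diagonal block is exactly $D_\ell$.
\end{itemize}

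The citation buys brevity. Your route buys an explicit check of conventions, which matters here. The paper uses $\zeta_m^{+j\ell}$ in $\F_m$ but $\zeta_N^{-kj}$ in $\F_N$, and it defines $C(B_0,\dots,B_{m-1})$ through its first block column. So the citation alone leaves the sign in $D_\ell$ to the reader, and your computation settles it.

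As a cross-check, your formula agrees with \eqref{e-bhat} in Corollary~\ref{C-explicit-blocks}(b). There the number of blocks is $p$, and in the Proposition's notation the first block row is $A_m=B_{-m}$. Hence $\sum_{m}A_m e^{-2\pi i sm/p}=\sum_j\zeta_p^{sj}B_j=D_s$.
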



\section{Structural Lemmas}
 
In this section, we establish the foundational algebraic properties of the frame operator matrix $S_\G$ for a finite Gabor system $\G=\G(g, L \times K)$. These lemmas will serve as the primary tools for proving our main results in the subsequent sections.

Our first lemma shows that, when the time-frequency set is a Cartesian product, the frame operator matrix can be expressed as the Hadamard product of two matrices: one depending only on the translations and the other depending only on the modulations of the window vector $g$.

\begin{Lemma}\label{L-Hadam}
	Let $\G= \G(g, L \times K)\subset   \C^N $ be a Gabor system. 
	  Let \newline $\M _L $  and $\T_K  $  be the $N\times N$ matrices whose elements are 
\begin{equation}\label{e-matrix-T-M}
\M_L[ i,j]= \sum_{\ell \in L } \zeta_N^{\ell(i-j)}, \quad \T_K[ i,j]=	\sum_{k \in K} g[i-k] \,\overline{g [j-k]},
\end{equation}
with $ i, j=0,\, ...,\, N-1$.	The frame matrix of $\G$ is  
	$$S_\G = \M_L \odot \T_K,$$ where $\odot$ denotes the Hadamard (componentwise) product.
\end{Lemma}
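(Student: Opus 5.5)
The argument is a direct entrywise computation using $S_\G = FF^*$. The plan is to write the $(i,j)$ entry of $S_\G$ as a sum over the frame vectors. Then we check that the summand factors into a part depending only on $\ell$ and a part depending only on $k$, so the double sum over $L\times K$ splits into a product.

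First, recall from Section 2.1 that $S_\G = FF^*$, where the columns of $F$ are the vectors $M_\ell T_k g$ with $(\ell,k)\in L\times K$. Hence
$$S_\G[i,j] = \sum_{(\ell,k)\in L\times K} (M_\ell T_k g)[i]\,\overline{(M_\ell T_k g)[j]}.$$
Next, using the definitions \eqref{e-transl-mod}, we have $(M_\ell T_k g)[i] = \zeta_N^{\ell i} g[i-k]$. The summand therefore equals
$$\zeta_N^{\ell i}\,\overline{\zeta_N^{\ell j}}\; g[i-k]\,\overline{g[j-k]} = \zeta_N^{\ell(i-j)}\, g[i-k]\,\overline{g[j-k]},$$
since $|\zeta_N|=1$. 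Indices are taken modulo $N$, and $\zeta_N^{\ell(i-j)}$ is well defined modulo $N$.

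Because the index set is a Cartesian product, the double sum factors as
$$\Big(\sum_{\ell\in L}\zeta_N^{\ell(i-j)}\Big)\Big(\sum_{k\in K} g[i-k]\overline{g[j-k]}\Big) = \M_L[i,j]\,\T_K[i,j].$$
This is exactly the $(i,j)$ entry of $\M_L\odot\T_K$, which proves the lemma.

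There is no real obstacle here. The only points needing care are the conjugation convention in $FF^*$ (the frame operator $x\mapsto\sum\langle x,v\rangle v$ has matrix $\sum v v^*$, with entries $v[i]\overline{v[j]}$), and the observation that the factorization relies essentially on $\Lambda$ being a product set $L\times K$.
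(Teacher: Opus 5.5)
Your proposal is correct and is essentially the paper's own proof. Both expand $S_\G[i,j]$ entrywise from $FF^*$, substitute $(M_\ell T_k g)[i]=\zeta_N^{\ell i}g[i-k]$, and split the double sum over the product set $L\times K$ into $\M_L[i,j]\,\T_K[i,j]$.
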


Recall that   $\zeta_N=e^{2\pi i/N}$. We will term the matrices $\M_L$ and $\T_K$   {\it modulation matrix} and {\it translation matrix}.

\begin{proof}
	Recall that the frame matrix of $\G$ is the $N \times N$ matrix $ FF^{*}$, where $F$ is the matrix whose columns are the vectors of the frame.   The $(i,j)$ entry of the frame matrix $FF^*$ is given by: 
	\begin{align*}
		S_\G[i,j] &= \sum_{(k,\ell)\in \Lambda}\left(M_\ell T_k g\right)[i] \overline{\left(M_\ell T_k g\right)[j]} \\
		&= \sum_{(k,\ell)\in \Lambda}  \zeta_N^{i\ell}  g[i-k]\overline{ \zeta_N^{j\ell} g[j-k]} \\
		&= \sum_{k\in K} \sum_{\ell \in L } \zeta_N^{(i-j)\ell} g[i-k] \overline{g[j-k]} \\
		&= \underbrace{\left( \sum_{\ell \in L } \zeta_N^{(i-j)\ell} \right)}_\text{$\M_L{[i,j]}$} \underbrace{\left( \sum_{k \in K} g[i-k] \,\overline{g [j-k]}\right)}_\text{$\T_K[i,j] $}
	\end{align*}
	as required.
\end{proof}

\begin{Rem}
   It is well known that   the rank of the  Hadamard product of two matrices  does not exceed the product of the ranks of these   matrices (see e.g.  \cite[Chapt. 5]{HornJohnson}).
Thus,  for the frame  $ \G=\G(g, L\times K)\subset \C^N$, we have that
$$\rank\, S_{\G}\leq  \rank \M_L \, \rank \T_K   \leq |K| \, |L|.$$

 It is easy to verify that  $\rank \M_L = |L|$, but we can  find window vectors  $g$ for which $\rank \T_K <|K|$. For instance, when  $g=(1,1,\, ...,\, 1)$, we have $\rank \T_K=1$ for  every translation set $K$.
 \end{Rem}

Lemma \ref{L-Hadam} shows that the structure of the frame operator $S$ is 
dictated by the structures of $M_L$ and $T_K$. We prove that these matrices are circulant under specific conditions on  the sets $L$ and $K$

\begin{Lemma}\label{L-circulant-prod}
	a) Let $\G=\G(g,\, L\times K)$. The modulation matrix $\M_L$ is circulant, and the associated vector is 
	$$\vec m= \left(|L|, \, \sum_{\ell \in L } e^{2 \pi i\ell/N},\, \dots, \, \sum_{\ell \in L } e^{2 \pi i\ell(N-1)/N}\right).$$
	
	b) If $K=\Z_N$, also the translation matrix $\T_K$ is circulant, and the associated vector is 
	$$\vec t= \left( \sum_{k \in \Z_N} | g [-k]|^2, \,  \sum_{k \in \Z_N} g[1-k] \,\overline{g [-k]},\, \dots, \sum_{k \in \Z_N} g[N-1-k] \,\overline{g [-k]}\right).$$
\end{Lemma}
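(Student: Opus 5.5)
The plan is to use the characterization that an $N\times N$ matrix $A$ is circulant exactly when $A[i,j]$ depends only on the difference $i-j \pmod N$. Equivalently, $A[i+1,j+1]=A[i,j]$ with indices read modulo $N$. In that case the associated vector is the first column, $c[s]=A[s,0]$. For both parts I will compute the entries from the formulas in \eqref{e-matrix-T-M} of Lemma \ref{L-Hadam}, check this shift invariance, and then read off the first column.

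For part a) there is essentially nothing to do. The entry $\M_L[i,j]=\sum_{\ell\in L}\zeta_N^{\ell(i-j)}$ visibly depends only on $i-j$. Because $\zeta_N^{N}=1$, it depends only on $i-j$ modulo $N$, so $\M_L$ is circulant. Setting $j=0$ and $i=s$ gives $c[s]=\sum_{\ell\in L}\zeta_N^{\ell s}=\sum_{\ell\in L}e^{2\pi i \ell s/N}$. For $s=0$ this equals $|L|$, and the remaining values give exactly the stated vector $\vec m$.

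For part b) the one real step is a change of summation variable, and it is the only place where the hypothesis $K=\Z_N$ enters. With $K=\Z_N$,
$$\T_K[i+1,j+1]=\sum_{k\in\Z_N} g[i+1-k]\,\overline{g[j+1-k]}.$$
The map $k\mapsto k'=k-1$ is a bijection of $\Z_N$, so this sum equals $\sum_{k'\in\Z_N} g[i-k']\,\overline{g[j-k']}=\T_K[i,j]$. Iterating the identity shows that $\T_K[i,j]=\T_K[i-j,0]$, so $\T_K$ is circulant. For a general $K$ the same substitution would replace $K$ by $K-1$, so the argument would fail unless $K$ is invariant under translation by $1$. That explains why the full group is needed here.

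The associated vector is again the first column. Its entries are $t[s]=\T_K[s,0]=\sum_{k\in\Z_N} g[s-k]\,\overline{g[-k]}$, with $t[0]=\sum_{k}|g[-k]|^2$, which matches the stated vector $\vec t$. I do not expect any genuine obstacle. The only care needed is to be consistent about the circulant convention $C[i,j]=c[i-j]$ used in Section 2.3, so that the stated vector is the first column and not the first row.
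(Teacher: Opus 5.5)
Your proposal is correct and follows essentially the same route as the paper. In both, you show that the entries depend only on $i-j \pmod N$, and for part b) you reindex the sum over $\Z_N$; the paper shifts by a general $s$ at once, whereas you shift by $1$ and iterate. You then read off the associated vector as the first column, consistent with the convention $C[i,j]=c[i-j]$ of Section 2.3.
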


\begin{proof} 
	a) The entries $\M_L[i, j]$ depend only on $i-j$, and for every $s\in\{0, 1, \dots, N-1\}$, 
	$$ \M_L[N-s, 0] = \sum_{\ell\in L} \zeta_N^{\ell(N-s)} = \sum_{\ell\in L} \zeta_N^{\ell( -s)} = \M_L( -s). $$
	This establishes that $\M_L$ is a circulant matrix.
	
	b) If $K=\Z_N$, we have that 
	\begin{align*}
		\T_K(i+s, j+s) &= \sum_{k \in \Z_N} g[i-(k-s)] \,\overline{g [j-(k-s)]} \\
		&= \sum_{k' \in \Z_N} g[i-k'] \,\overline{g [j-k']} = \T_K(i , j ).
	\end{align*}
	Thus, the matrix $\T_K$ is Toeplitz. Since 
	$$ \T_K(N-s, 0) = \sum_{k \in \Z_N} g[N-s-k] \,\overline{g [-k]} = \sum_{k \in \Z_N} g[ -s-k] \,\overline{g [-k]} = \T_K(-s), $$
	we have proved that it is circulant.
\end{proof}	

When the translation set $K$ is a proper subgroup of $\Z_N$, the translation matrix is no longer strictly circulant, but retains a block-circulant structure.

\begin{Lemma}\label{L-trans-block-circ}
	Let $K$ be a subgroup of translations of order $p$, and let $M = \frac{N}{p}$. The translation matrix $\T_K$ is block-circulant with blocks of size $M \times M$. That is, for all $i, j \in \mathbb{Z}_N$:
\begin{equation}\label{e-per-3} \T_K[i+M, j+M] = \T_K[i, j]. \end{equation}
	
\end{Lemma}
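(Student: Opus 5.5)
The key fact is that a subgroup $K$ of $\Z_N$ of order $p$ is cyclic and generated by $M=N/p$:
$$K=\{0,\,M,\,2M,\,\dots,\,(p-1)M\}.$$
Hence $K$ is invariant under translation by $M$, that is, $K+M=K$ in $\Z_N$. I will verify \eqref{e-per-3} directly from the definition \eqref{e-matrix-T-M}, then read off the block-circulant structure.

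First, the shift identity. For any $i,j\in\Z_N$,
$$\T_K[i+M,j+M]=\sum_{k\in K} g[i-(k-M)]\,\overline{g[j-(k-M)]}.$$
The map $k\mapsto k-M$ is a bijection of $K$ onto itself, because $K$ is a subgroup containing $M$. Reindexing with $k'=k-M$ therefore gives
$$\sum_{k'\in K} g[i-k']\,\overline{g[j-k']}=\T_K[i,j],$$
which is \eqref{e-per-3}. All indices are taken modulo $N$, so the identity holds for every $i,j\in\Z_N$.

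Second, the block structure. Partition the $N\times N$ matrix into $p\times p$ blocks of size $M\times M$, writing $B_{a,b}$ for the block whose rows are indexed by $aM,\dots,aM+M-1$ and whose columns are indexed by $bM,\dots,bM+M-1$, with $a,b\in\{0,\dots,p-1\}$. Iterating \eqref{e-per-3} gives
$$B_{a+1,b+1}=B_{a,b},$$
with block indices taken modulo $p$, since $pM=N\equiv 0$. So $B_{a,b}$ depends only on $a-b \bmod p$. Setting $B_s:=B_{s,0}$, we get $B_{a,b}=B_{a-b}$, which is exactly the pattern in \eqref{e-block-circ} with $m=p$ and $k=M$.

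The only point needing care is the indexing convention: the block-circulant form \eqref{e-block-circ} needs block $(a,b)$ to equal $B_{a-b}$, and this is precisely the shift invariance along block diagonals that \eqref{e-per-3} supplies, including the wrap-around. Note that this argument does not need $\T_K$ to be Toeplitz within blocks, and in general it is not.
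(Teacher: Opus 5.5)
Your proof is correct and follows the paper's argument: since $K=\langle M\rangle$ is invariant under $k\mapsto k-M$, reindexing the sum that defines $\T_K[i+M,j+M]$ yields \eqref{e-per-3}. Your second step derives $B_{a,b}=B_{(a-b)\bmod p}$, with the wrap-around coming from $pM=N$, which spells out the passage to the form \eqref{e-block-circ} that the paper only asserts. It also correctly shows that $\T_K$ is determined by its first block column $B_{0,0},\dots,B_{p-1,0}$, and not by the top-left $M\times M$ block alone, as the remark following the lemma in the paper suggests.
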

Note that  the relation \eqref{e-per-3}   implies that the $N \times N$ matrix is determined entirely by its top-left $M \times M$ block.
\begin{proof}
	Recall that the unique  subgroup of $\mathbb{Z}_N$ of order $p$ is generated by the element $M = \frac Np$. Thus, we can write 
	$$K =\langle M\rangle = \{0, M, 2M, \dots, (p-1)M\}.$$ The $(i,j)$-th entry of the translation matrix is given by:
	$$ \T_K[i, j] = \sum_{k \in K} g[i-k] \overline{g[j-k]}. $$
	
	Now consider the entry at indices shifted by $M$:
	$$ \T_K[i+M, j+M]= \sum_{k \in K} g[i+M-k] \overline{g[j+M-k]}. $$
	
	Let $k' = k - M$. Since $M \in K$ and $K$ is a subgroup, the map $k \mapsto k-M$ is a bijection from $K$ onto itself. Substituting $k = k' + M$ into the sum, we obtain:
	\begin{align*}
		\T_K[i+M, j+M] &= \sum_{k' \in K} g[i+M-(k'+M)] \overline{g[j+M-(k'+M)]} \\
		&= \sum_{k' \in K} g[i-k'] \overline{g[j-k']}  
		 = \T_K[i, j]
	\end{align*}
	which completes the proof.
\end{proof}

Our next lemma shows how  specific   window vectors $g$  can produce highly sparse matrices. 

\begin{Lemma}\label{L-trans-sparsity}
	Let $K$ be a subgroup of translations of order $p$, and let $M = \frac{N}{p}$. The translation matrix $\T_K$ is block-circulant with blocks of size $M \times M$.  
 
	Furthermore, if the window vector $g$ is constructed by interlacing $M$ mutually orthogonal vectors $\{h_k\}_{k=0}^{M-1}$ in $\mathbb{C}^p$ such that $g[k+lM] =  h_k[l]$, then $\T_K$ can only be nonzero   on diagonals that are multiples of $M$.
\end{Lemma}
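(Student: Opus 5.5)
The first assertion, that $\T_K$ is block-circulant with $M\times M$ blocks, is exactly Lemma \ref{L-trans-block-circ}, so I will simply invoke it. The substance is the sparsity claim: I need to show that $\T_K[i,j]=0$ whenever $j-i$ is not a multiple of $M$.

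The plan is to compute $\T_K[i,j]$ directly from \eqref{e-matrix-T-M}, using $K=\langle M\rangle=\{0,M,\dots,(p-1)M\}$. Write $i=a+uM$ and $j=b+vM$ with $0\le a,b<M$ and $0\le u,v<p$. For $k=tM\in K$, the index $i-k=a+(u-t)M$ is taken modulo $N=pM$. Its residue modulo $M$ is still $a$, and its "row index" is $(u-t)\bmod p$. By the interlacing definition, $g[i-tM]=h_a[(u-t)\bmod p]$ and $\overline{g[j-tM]}=\overline{h_b[(v-t)\bmod p]}$. Hence
$$\T_K[i,j]=\sum_{t=0}^{p-1} h_a[u-t]\,\overline{h_b[v-t]},$$
with indices of $h_a,h_b$ read modulo $p$.

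Next I need the hypothesis to kill this sum when $a\neq b$. Orthogonality of $h_a$ and $h_b$ gives only $\sum_t h_a[t]\overline{h_b[t]}=0$, which corresponds to equal cyclic shifts, i.e.\ $u=v$. For $u\ne v$ the sum is $\langle T_{u-v}h_a,h_b\rangle$ up to reindexing, and plain orthogonality does not control it. So the key step, and the main obstacle, is to decide how to read "mutually orthogonal" so that the claim holds. I would interpret it in the sense needed for Theorem \ref{T-Interlace}: the cyclic shifts of $h_a$ are orthogonal to those of $h_b$, equivalently their DFTs $\hat h_a,\hat h_b$ have disjoint supports. This is consistent with the remark that at most $p$ such vectors exist in $\C^p$, so $M\le p$. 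I would state this reading explicitly.

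With that reading, $\T_K[i,j]=0$ whenever $a\ne b$, that is, whenever $i\not\equiv j \pmod M$. Such entries are exactly those on diagonals $D_{\T_K}(s)$ with $s=j-i$ not a multiple of $M$; note that $N$ is itself a multiple of $M$, so the wrap-around is harmless. This completes the argument. If instead the authors intend only the literal orthogonality, the conclusion would have to be restricted to the entries with $u=v$ (the diagonal $M\times M$ blocks), and I would check a small example, such as $p=2$, $M=2$, to see which version is actually true before committing.
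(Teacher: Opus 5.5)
Your entry formula $\T_K[i,j]=\sum_{t=0}^{p-1}h_a[u-t]\,\overline{h_b[v-t]}$ is correct. Your suspicion is also well founded: the gap is in the paper's proof, not yours, and the lemma is false under plain orthogonality.

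The paper computes only the top-left block, $B[a,b]=\langle h_a,h_b\rangle$. It then asserts that, by block-circulance, $\T_K$ is determined by $B$. But \eqref{e-per-3} only identifies block $(u,v)$ with block $(u+1,v+1)$. So $\T_K$ is determined by the $p$ blocks of its first block row, as the paper itself uses in Corollary \ref{C-explicit-blocks}(b). The entries of the blocks with $u\neq v$ are exactly your cyclic cross-correlations of $h_a$ and $h_b$, and the paper never controls them.

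The small case you proposed to check settles it. Take $N=4$, $p=M=2$, $K=\{0,2\}$, $h_0=(1,0)$, $h_1=(0,1)$. The interlaced window is $g=(1,0,0,1)$, and $\T_K[0,3]=g[0]\overline{g[3]}+g[2]\overline{g[1]}=1$. This entry lies on diagonal $3$, which is not a multiple of $M=2$.

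So drop the hedge. State the stronger hypothesis as a necessary correction, not an optional interpretation: $\langle h_a,T_m h_b\rangle=0$ for all $m\in\Z_p$ and $a\neq b$, equivalently $\hat h_a\,\overline{\hat h_b}\equiv 0$. Under it your argument is complete. Under literal orthogonality one only gets $\T_K[i,j]=0$ for $a\neq b$ and $u=v$, i.e.\ the diagonal blocks are diagonal matrices; that is all the paper's computation actually proves.

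You are also right that the stronger hypothesis is what Theorem \ref{T-Interlace} needs. Take $N=6$, $K=L=\{0,2,4\}$ (so $p=r=3$, $M=2$), $h_0=(1,0,0)$, $h_1=(0,1,0)$, giving $g=(1,0,0,1,0,0)$. Then $\gcd(p,N/r)=1$, yet $S_\G[0,3]=\M_L[0,3]\,\T_K[0,3]=3\cdot 1\neq 0$.
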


\begin{proof}
	
	Lemma 3.3 shows that the translation matrix $\T_K$ is completely determined by its top-left block of size $M \times M$, which we will call $B$. That is: $B[i,j]= \T_K[i,j]$ where $0 \leq i,j \leq M-1$.
	
	We calculate the entries of the top-left block $B$:
	$$B[i,j] = \sum_{k \in K} g[i+k] \overline{g[j+k]} = \sum_{l=0}^{p-1} g[i+lM] \overline{g[j+lM]}.$$
	
	By our construction of $g$, the term $g[i+lM]$ is exactly the $l$-th component of the vector $h_i$, and $g[j+lM]$ is the $l$-th component of $h_j$. Therefore, the sum is exactly the inner product of the vectors $h_i$ and $h_j$ in $\mathbb{C}^p$:
	$$ B_{i, j} = \sum_{l=0}^{p-1} (h_i)_l \overline{(h_j)_l} = \langle h_i, h_j \rangle_{\mathbb{C}^p}. $$
	
	Since the set $\{h_k\}$ is mutually orthogonal by assumption, we have:
	$$ B(i,j) = \langle h_i, h_j \rangle = 0 \quad \text{for } i \neq j. $$
	
	Thus, $B$ is a diagonal matrix. Due to the block-circulant structure of $\T_K$, the entry $\T_K[i,j] $  = $B_{i ,\,j}$, with   $i,j\in\Z_M$, is non-zero only if $i \equiv j \pmod M$. Consequently, $\T_K$ is potentially nonzero only on diagonals where $d=i-j$ is a multiple of $M$.
\end{proof}

By Lemma \ref{L-Hadam},  the frame operator matrix of $\G=\G(g,\, L\times K)$ is $S_\G = \M_L \odot \T_K$. Consequently, any zero entry in either $\M_L$ or $\T_K$ forces a zero in $S_\G$. In the next lemma we evaluate the exponential sums  in the definition of $\M_L$. 

\begin{Lemma}\label{L-subgroup-sum}
	Let $L$ be a subgroup of $\mathbb{Z}_N$ of order $r$. For any integer $d$:
	$$ \sum_{\ell \in L} \zeta_N^{\ell d} = 
	\begin{cases} 
		r & \text{if } r \text{ divides } d \\ 
		0 & \text{if } r \text{ does not divide } d. 
	\end{cases} $$
\end{Lemma}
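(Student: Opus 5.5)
The plan is to parametrize $L$ explicitly and reduce the sum to a geometric series of $r$-th roots of unity. As in the proof of Lemma \ref{L-trans-block-circ}, the unique subgroup of $\Z_N$ of order $r$ is the cyclic subgroup generated by $N/r$. So we may write
$$L=\Big\{\, t\,\tfrac{N}{r} \;:\; t=0,1,\dots,r-1\Big\},$$
with these $r$ elements distinct modulo $N$. Substituting $\ell=tN/r$ gives
$$\zeta_N^{\ell d}=e^{2\pi i\, t (N/r) d/N}=\zeta_r^{\,td}.$$
Hence
$$\sum_{\ell\in L}\zeta_N^{\ell d}=\sum_{t=0}^{r-1}\big(\zeta_r^{\,d}\big)^t .$$
Because $\zeta_N^{N}=1$, this expression is well defined for $\ell\in\Z_N$ and any integer $d$.

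Next I split into two cases according to whether $\omega:=\zeta_r^{\,d}$ equals $1$. Since $\zeta_r$ is a primitive $r$-th root of unity, $\omega=1$ exactly when $r\mid d$.

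\emph{Case $r\mid d$.} Every term of the sum equals $1$, so the sum is $r$.

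\emph{Case $r\nmid d$.} Here $\omega\neq 1$ and $\omega^r=\zeta_r^{\,rd}=1$. The geometric series formula gives
$$\sum_{t=0}^{r-1}\omega^t=\frac{1-\omega^r}{1-\omega}=0.$$

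No step here is a genuine obstacle. The only point needing care is the identification of $L$ with $\langle N/r\rangle$, which relies on the uniqueness of subgroups of a given order in the cyclic group $\Z_N$. The reindexing from $\ell$ to $t$ must also be a bijection, which holds because $L$ has exactly $r$ elements. An alternative, group-theoretic route is to view $\ell\mapsto\zeta_N^{\ell d}$ as a character of the finite group $L$: it is trivial precisely when $\zeta_N^{(N/r)d}=1$, that is, when $r\mid d$, and the sum of a nontrivial character over a finite group vanishes. I would use the explicit geometric-series computation, since it is self-contained.
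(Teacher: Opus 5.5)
Your proposal is correct and follows essentially the same route as the paper: parametrize $L=\langle N/r\rangle$ as $\{tN/r\}_{t=0}^{r-1}$ and reduce to $\sum_{t=0}^{r-1}\zeta_r^{td}$. You then treat $r\mid d$ (every term equals $1$) and $r\nmid d$ (a geometric series with ratio $\omega\neq 1$, $\omega^r=1$) separately, with your well-definedness and bijectivity remarks making explicit what the paper leaves implicit.
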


\begin{proof}
	Since $L$ is a subgroup of order $r$, it is generated by the element $N/r$. Every $\ell \in L$ can be written as $\ell = k N/r$, with $k~=~0, 1, \dots, r-1$. Substituting this into the sum yields:
	$$ \sum_{\ell \in L} \zeta_N^{\ell d} = \sum_{k=0}^{r-1} e^{2\pi i (k N/r) d / N} = \sum_{k=0}^{r-1} e^{2\pi i k d / r}. $$
	If $r$ divides $d$, then $e^{2\pi i k d / r} = 1$ for all $k$, and the sum evaluates to $r$. If $r$ does not divide $d$, we use the formula for a finite geometric series:
	$$ \sum_{k=0}^{r-1} (e^{2\pi i d / r})^k = \frac{1 - e^{2\pi i d}}{1 - e^{2\pi i d / r}} = 0 $$
	which completes the proof.
\end{proof}

Finally, we state a   matrix-theoretic lemma that allows to prove most of our  results.  Version of this lemma  appear  in the literature,  but since we could not find a reference to the complete result, we have proved it in the Appendix.

\begin{Lemma}\label{L-main-block}
	Let $A=\{a_{i,j}\}_{i, j=1,\dots,N}$, and let $U \subset [-N+1, N-1]$ be the set of the indices of diagonals of $A$ that are not identically zero. Assume that $0 \in U$ and if $d \in U$, then $-d \in U$. Assume also that:
	$$ \ell := \gcd(N, \{d \in U\}) > 1. $$
	Then, $A$ is orthogonally equivalent to a block-diagonal matrix with $\ell$ blocks, all of size $N/\ell$. The element $(i,j)$ of the block $B_s$ is:
	$$ b_{i,j}^{(s)} := a_{i\ell+s, j\ell+s} $$
	where $i, j = 0, 1, \dots, \frac{N}{\ell}-1$ and $s = 0, 1, \dots, \ell-1$.
\end{Lemma}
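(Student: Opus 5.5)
The plan is to find an explicit permutation matrix $P$ such that $P^T A P$ is block-diagonal. The permutation groups the indices by their residue modulo $\ell$. Throughout, indices run over $0,\dots,N-1$, shifted from the $1,\dots,N$ of the statement; this is harmless.

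First I observe that every $d\in U$ is a multiple of $\ell$, since $\ell$ divides each such $d$. Hence if $a_{i,j}\neq 0$, then $j-i\in U$, and so $i\equiv j \pmod \ell$. Equivalently, $a_{i,j}=0$ whenever $i\not\equiv j \pmod \ell$. The hypotheses $0\in U$ and $U=-U$ are not really needed for this implication. The condition $\ell\mid N$ guarantees that each residue class $\{s, s+\ell, \dots, s+(N/\ell-1)\ell\}$ contains exactly $N/\ell$ indices in $[0,N-1]$.

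Next I define the bijection $\sigma:\{0,\dots,N-1\}\to\{0,\dots,N-1\}$ by $\sigma(s\frac{N}{\ell}+i)=i\ell+s$ for $0\le s<\ell$ and $0\le i<N/\ell$. This is a bijection because $i\ell+s$ runs over all residues and quotients exactly once. Let $P$ be the permutation matrix with $Pe_m=e_{\sigma(m)}$; it is real orthogonal. Then
\[
(P^TAP)_{m,n}=a_{\sigma(m),\sigma(n)}.
\]
Take $m=s\frac N\ell+i$ and $n=s'\frac N\ell+j$. The entry $a_{i\ell+s,\,j\ell+s'}$ vanishes unless $s\equiv s'\pmod\ell$, which by the range of $s,s'$ forces $s=s'$. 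So $P^TAP$ is zero outside the $\ell$ diagonal blocks indexed by $s$, each of size $N/\ell$. The $(i,j)$ entry of the $s$-th block is exactly $a_{i\ell+s,\,j\ell+s}=b^{(s)}_{i,j}$, as claimed. Therefore $A=P\,\diag(B_0,\dots,B_{\ell-1})\,P^T$, which is an orthogonal equivalence.

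There is no real obstacle here. The only care needed is the bookkeeping of the permutation: one must check that $\sigma$ is a bijection, and that the diagonal indices $j-i$ are taken as integers in $[-N+1,N-1]$ rather than mod $N$. Divisibility of $j-i$ by $\ell$ gives the same conclusion either way, since $\ell\mid N$.
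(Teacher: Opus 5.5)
Your proof is correct and uses the same mechanism as the paper's Appendix argument: reorder the standard basis by residue classes modulo $\ell$ (the cosets of $\langle \ell\rangle$), and observe that a nonzero entry $a_{i,j}$ forces $j-i\in U$. Your version is leaner, since the paper also builds the transitive closure of the ``linking'' relation and uses B\'ezout to show $\langle U\rangle=\langle \ell\rangle$; you rightly note that only the inclusion $U\subseteq \ell\Z$ is needed, so the hypotheses $0\in U$ and $U=-U$ play no role.
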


 \section{Cyclotomic structures}
 Let $\G=\G(g,  L \times K)\subset \C^N$ be a   Gabor system. When neither $L$ nor $K$ is a subgroup of $\Z_N$, the associated frame operator matrix cannot be reduced to block-diagonal form in an obvious way.
 
 Nevertheless, one may ask under which conditions certain diagonals of the frame matrix are identically zero. 
 
 Cyclotomic polynomials arise naturally in this context: 
 %
 %
 Let $\zeta_n := e^{2\pi i /n}$ be a primitive $n$-th root of unity. The cyclotomic polynomial of index $n\in\N$ is defined as
 $$
 \Phi_n(z):=\prod_{\substack{1\leq j\leq n \\ \gcd(j,n)=1}} (z-\zeta_n^j).
 $$
 The degree of $\Phi_n(z)$ is Euler's totient function $\varphi(n)$, which counts the integers $1\le j<n$ that are coprime with $n$.
 
 The polynomials $\Phi_n(z)$ have integer coefficients and are irreducible in $\Q[z]$ (see, e.g., \cite{Sanna}, \cite[Chapter~9]{Barbeau}). Moreover,
 \begin{equation}\label{e-prod-cyclo}
 	z^n-1=\prod_{d\mid n}\Phi_d(z).
 \end{equation}
 
 Understanding when a polynomial is divisible by a cyclotomic polynomial $\Phi_n(z)$ plays an important role in many areas of pure and applied mathematics. A necessary and sufficient condition for such divisibility is given in \cite[Theorem~1.1]{DC-Laporta}; see also \cite[Prop.~5.5]{LabaLond}.
 
 \medskip
 
 For a subset $A\subset \Z_N$, define its \emph{characteristic polynomial} by
 $$
 P_A(z)=\sum_{a\in A} z^a.
 $$
 Let $\G = \G(g, L \times K) \subset \mathbb{C}^N$ be a finite Gabor system.  
  The following theorem shows that the divisibility properties of $P_L(z)$ by cyclotomic polynomials impose strong   constraints on the frame matrix of $\G$,  forcing certain diagonals to vanish identically.
 
 \begin{Thm}\label{T-cyclo} 
 	Suppose there exists $d>1$  that divides $N$  such that the cyclotomic polynomial $\Phi_d(z)$ divides $P_L(z)$. Then the diagonals of  the frame matrix of $\G$ with indices $\pm s\frac{N}{d}$, where $\gcd(s,d)=1$, are identically zero.
 \end{Thm}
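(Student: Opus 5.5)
By Lemma \ref{L-Hadam}, $S_\G=\M_L\odot\T_K$, so every zero entry of $\M_L$ is also a zero entry of $S_\G$. It therefore suffices to show that $\M_L$ vanishes identically on the diagonals with indices $\pm sN/d$ whenever $\gcd(s,d)=1$. This plan never uses $\T_K$, so no assumption on $K$ or $g$ is needed.

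First, the entries of $\M_L$ on a given diagonal are constant. By Lemma \ref{L-circulant-prod}(a), or directly from \eqref{e-matrix-T-M}, the entry $\M_L[i,j]$ depends only on the difference $j-i$:
$$\M_L[i,j]=\sum_{\ell\in L}\zeta_N^{\ell(i-j)}=P_L\big(\zeta_N^{\,i-j}\big).$$
Hence every entry on the diagonal $D_{\M_L}(t)$, where $t=j-i$, equals $P_L(\zeta_N^{-t})$.

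Now fix $t=\pm sN/d$ with $\gcd(s,d)=1$. Then
$$\zeta_N^{-t}=e^{\mp 2\pi i s/d}=\zeta_d^{\mp s}.$$
Because $\gcd(\mp s,d)=1$, this is a primitive $d$-th root of unity, so it is a root of $\Phi_d(z)$. By hypothesis $\Phi_d(z)$ divides $P_L(z)$, hence $P_L(\zeta_d^{\mp s})=0$. So every entry of $\M_L$ on these diagonals is zero, and by the Hadamard product the same holds for $S_\G$.

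The one point that needs care is the bookkeeping of indices. Elements of $L$ are residues mod $N$, and we fix representatives in $\{0,\dots,N-1\}$ so that $P_L$ is a genuine polynomial. The value $P_L(\omega)$ at any $N$-th root of unity $\omega$ does not depend on the choice of representatives, because $\omega^N=1$. The diagonal indices $t$ range over $[-N+1,N-1]$, and the identity $\zeta_N^{-t}=\zeta_d^{\mp s}$ holds for any integer $s$, since only $s \bmod d$ matters. So both signs, and all admissible $s$ (for example $s<d$ together with its negative), are covered by the same computation. I expect no real obstacle; the key step is simply recognising $\M_L[i,j]$ as $P_L$ evaluated at a root of unity.
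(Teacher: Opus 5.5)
Your proposal is correct and follows essentially the same route as the paper. Both arguments reduce to the modulation matrix $\M_L$ via $S_\G=\M_L\odot\T_K$, identify the constant value on each diagonal as $P_L$ evaluated at a root of unity, and observe that at indices $\pm sN/d$ with $\gcd(s,d)=1$ this is a primitive $d$-th root of unity, hence a zero of $\Phi_d$ and therefore of $P_L$; your explicit handling of both signs and of the representatives mod $N$ is slightly more careful than the paper's.
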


\begin{proof}  We show that the diagonals of the matrix $\$M_L$ with indices $\pm s\frac{N}{d}$, where $\gcd(s,d)=1$, are  identically  zero. By Lemma \ref{L-Hadam},the same is true of the diagonals of the  matrix  $S_\G$.
	
	 Recall that by Lemma \ref{L-circulant-prod}, the  modulation matrix $\M_L$ is circulant and  it is identified by the vector
	$\vec m= (m_0,\, m_1,\, ... m_{N-1})$, with $m_j~=~\sum_{\ell\in L} \zeta_N^{\ell j}$.
	We show that  if $s\in \{0,\, 1,\,... N-1\}$ satisfies gcd$(s,d)=1$,  then $ 
	m_{s\frac{n}{d}} = 0.
	$ 
	Indeed, 
	$$
	m_j=\sum_{\ell\in L} \zeta_N^{\ell j}
	= P_L\big(\zeta_N^{  j}\big).
	$$
	
	Since   $d $ divides $ N$, we can write $N=dd'$. For every $s$ with $(s,d)=1$,  we have that $e^{2\pi i s/d}$ is a primitive $d$-th root of unity, and hence a root of $\Phi_d(z)$ and also of $P_L(z)$. Thus, 
	$$
	m_{s\frac Nd}=m_{sd'} =   P_L(
	e^{2\pi i (sd')/N})=
	P_L(e^{2\pi i s/d}) = 0.
	$$
\end{proof}

 Divisibility by cyclotomic polynomials also arise naturally in problems related to tiling   of finite cyclic groups.
 
 \medskip
 
 Let $A\subset \Z_N$. We say that $A$ \emph{tiles} $\Z_N$ by translation if there exists a set $B=\{b_0,\dots,b_{m-1}\}\subset \Z_N$ such that
 $ 
 A+b_j \cap A+b_k=\emptyset \quad \text{if } j\neq k,
 $ 
 and
 $ \dsize 
 \bigcup_{j=0}^{m-1}(A+b_j)=\Z_N.
 $ 
 In this case we write $\Z_N = A \oplus B$.
 
 All subgroups of $\Z_N$ are of the form
 $ 
 d\,\Z_N=\{kd : k=0,\dots,\frac Nd-1\}
 $,  with $ d\mid N$
 and hence
 \begin{equation}\label{e-tile}
 	\Z_N = d\,\Z_N \oplus \{0,1,\dots,d-1\}.
 \end{equation}
 We may also denote the subgroup $d\,\Z_N$ by $\langle d\rangle$ when there is no risk of confusion.
 
 The sets that tile $\Z_N$ are not limited to subgroups. Indeed, $A$ tiles $\Z_N$ with translation set $B$ if and only if $B$ tiles $\Z_n$ with translation set $A$.   For instance, \eqref{e-tile} shows that also  the set $\{0,1,\dots,d-1\}$  tiles $\Z_n$, but this is not a subgroup  of $\Z_N$ if $N>d$. There are also more intricate sets that tile $\mathbb{Z}_N$ under certain conditions on $N$. For further references, see~\cite{Szabo1985, CovenMeyerowitz1999}.

 It is proved  in \cite[Theorem~B1]{CovenMeyerowitz1999} and also in \cite{LabaLond}, that if $A$ tiles $\Z_N$ with translation set $B$, then
  $P_A(z)$ is divisible by cyclotomic polynomials $\Phi_d(z)$ for some  $d$ that divides $N$.  
 
This observation combined  with Proposition~\ref{T-cyclo}    yields the following corollary.

 \begin{Cor}\label{C-tiling}  Assume that  $L\subset \Z_N$  tiles $\Z_N$  by translation. Then, there exist an integer $d>1$ that divides $N$  such that
 	the diagonals of  the frame matrix of $\G$ with indices $\pm s\frac{N}{d}$   with gcd$(s,d)=1$   are identically zero.\end{Cor}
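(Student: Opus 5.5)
The proof combines Theorem \ref{T-cyclo} with the Coven--Meyerowitz divisibility result quoted just before the statement. Theorem \ref{T-cyclo} already does the matrix work: if $\Phi_d(z)\mid P_L(z)$ for some $d>1$ with $d\mid N$, then the diagonals of $S_\G$ with indices $\pm s\frac Nd$, $\gcd(s,d)=1$, vanish. So it suffices to produce, from the hypothesis that $L$ tiles $\Z_N$, one divisor $d>1$ of $N$ with $\Phi_d\mid P_L$.

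Suppose $\Z_N=L\oplus B$. As polynomials, the tiling gives
$$P_L(z)P_B(z)\equiv \frac{z^N-1}{z-1}\pmod{z^N-1}.$$
I would invoke \cite[Theorem~B1]{CovenMeyerowitz1999} (or \cite{LabaLond}) in the following form. For every $d\mid N$ with $d>1$, $\Phi_d$ divides $P_L$ or $P_B$. Indeed, evaluating at the primitive $d$-th root $\zeta_d$, the right side vanishes because $\zeta_d\neq1$ and $\zeta_d^N=1$. Hence $P_L(\zeta_d)P_B(\zeta_d)=0$, and irreducibility of $\Phi_d$ over $\Q$ gives $\Phi_d\mid P_L$ or $\Phi_d\mid P_B$. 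This argument is elementary and I would include it, so the corollary does not depend on the exact formulation in the reference.

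The main obstacle is to make sure at least one such $d$ falls on the $L$ side, rather than all on the $B$ side. The standard cardinality identity settles this. Write
$$|L|=P_L(1)=\prod_{\substack{d\mid N,\ d>1\\ \Phi_d\mid P_L}}\Phi_d(1)\cdot Q(1)$$
for some $Q\in\Z[z]$. If no $\Phi_d$ with $d\mid N$, $d>1$, divides $P_L$, then every such $\Phi_d$ divides $P_B$. Since these $\Phi_d$ are distinct irreducibles, their product $\frac{z^N-1}{z-1}$ divides $P_B$. Because $\deg P_B<N$ and the coefficients of $P_B$ are $0$ or $1$, this forces $P_B=c\,\frac{z^N-1}{z-1}$ with $c=1$, that is, $B=\Z_N$ and $|L|=1$. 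In the degenerate case $|L|=1$, say $L=\{\ell_0\}$, we have $\M_L[i,j]=\zeta_N^{\ell_0(i-j)}\neq0$, and the claim genuinely fails. So I would note that $|L|>1$ is implicitly assumed, as it is whenever $L$ is a nontrivial tile. Under $|L|>1$, some $d>1$ with $d\mid N$ has $\Phi_d\mid P_L$, and Theorem \ref{T-cyclo} finishes the proof.
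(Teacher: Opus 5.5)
Your proof is correct and has the same skeleton as the paper's: find a divisor $d>1$ of $N$ with $\Phi_d\mid P_L$, then apply Theorem~\ref{T-cyclo}. The difference is in how $d$ is obtained.

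The paper simply cites \cite[Theorem~B1]{CovenMeyerowitz1999}. The statement of that theorem is the counting identity $|L|=\prod_{s\in S_L}\Phi_s(1)$, taken over the prime powers $s$ with $\Phi_s\mid P_L$. The dichotomy ``$\Phi_d\mid P_L$ or $\Phi_d\mid P_B$'' that you attribute to it is really the elementary ingredient of its proof.

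You instead derive that dichotomy directly from $P_LP_B\equiv\frac{z^N-1}{z-1}\pmod{z^N-1}$, and finish with a degree count that forces $B=\Z_N$. Your route is shorter and self-contained. The counting route yields a little more, e.g.\ that $d$ can be chosen to be a prime power dividing $N$. Your displayed identity $|L|=\prod\Phi_d(1)\cdot Q(1)$ is the first line of that counting route, but it plays no role in your argument; either drop it or complete it.

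Your caveat about $|L|=1$ is a genuine correction, not a technicality. A singleton tiles $\Z_N$ with $B=\Z_N$. For it, the paper's paraphrase of the cited result (``$P_A$ is divisible by some $\Phi_d$ with $d\mid N$'') is false, and so is the corollary.

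To make ``the claim genuinely fails'' airtight, you should exhibit a window. The fact that $\M_L$ has no zero entries does not by itself stop $\T_K$ from killing a diagonal. Take $K=\{0\}$ and $g=(1,\dots,1)$: then $S_\G[i,j]=\zeta_N^{\ell_0(i-j)}\neq0$ for all $i,j$, so no diagonal vanishes for any $d$.
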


 The next theorem generalises the result of Theorem~\ref{T-Interlace} to the non-subgroup case of modulation.

 \begin{Cor}\label{C-cyclo-Interlace}
Let $\G= \G(g,\, L \times K)$ be a Gabor system; assume that  $K$ is a subgroup of order $p$ and $g$ is defined as in Theorem \ref{T-Interlace}. Let
%
%
\begin{equation}\label{e-def-D}
D=\{d_1, \dots, d_k\ \ : \mbox{$ d_l \mid N$   and  $\Phi_{d_l}(x) \mid P_L(x)$ \ for  all $l = 1, \dots, k$   }\}.
\end{equation}
Let $M = N/p$. 
If for every $j=1,...,\frac{N}{p}-1$ there exists $d_l \in D$ satisfying 
$\gcd\!\left(\frac{j p d_l}{N},\, d_l\right) = 1$, 
then the frame operator matrix $S_\G$ is   diagonal.
\end{Cor}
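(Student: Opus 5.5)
The plan is to combine the two sparsity mechanisms already available. By Lemma~\ref{L-Hadam}, $S_\G=\M_L\odot \T_K$, so an entry of $S_\G$ vanishes as soon as the corresponding entry of either factor vanishes. It therefore suffices to show that on every off-main diagonal, at least one of $\M_L$ and $\T_K$ is identically zero.

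\textbf{Step 1: the support of $\T_K$.} Since $K$ is a subgroup of order $p$ and $g$ is the interlaced window of Theorem~\ref{T-Interlace}, Lemma~\ref{L-trans-sparsity} says that $\T_K$ can be nonzero only on diagonals whose index is a multiple of $M=N/p$. All other diagonals of $S_\G$ vanish automatically. Because $\T_K$ satisfies \eqref{e-per-3} and $\M_L$ is circulant (Lemma~\ref{L-circulant-prod}), it is convenient to read diagonal indices modulo $N$. A diagonal with index $t$ and the one with index $t-N$ then carry the same circulant value $m_t$ of $\M_L$. Moreover, Theorem~\ref{T-cyclo} treats $\pm t$ simultaneously, so I only need to handle representatives $0<t<N$ of the nonzero residue classes.

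\textbf{Step 2: killing the remaining diagonals with $\M_L$.} Fix such a surviving diagonal index $t\not\equiv 0 \pmod N$. I choose $d_l\in D$ from the hypothesis attached to this diagonal and set $s:=t\,d_l/N$, so that $t=s\frac{N}{d_l}$. If $s$ is an integer with $\gcd(s,d_l)=1$, then Theorem~\ref{T-cyclo} gives $m_t=P_L(\zeta_N^{t})=P_L(e^{2\pi i s/d_l})=0$, because $e^{2\pi i s/d_l}$ is then a primitive $d_l$-th root of unity, hence a root of $\Phi_{d_l}$ and so of $P_L$. Consequently the whole diagonal $\pm t$ of $\M_L$, and therefore of $S_\G$, is zero. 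The hypothesis $\gcd\!\left(\frac{jpd_l}{N},d_l\right)=1$ is meant to be exactly this condition, once the surviving diagonals are enumerated by $j$. Doing this for every surviving off-diagonal index leaves only the main diagonal, so $S_\G$ is diagonal.

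\textbf{Main obstacle: matching the indexing.} The hard step is reconciling the enumeration of the surviving diagonals with the indexing in the statement. Lemma~\ref{L-trans-sparsity} lists them as $t=jM$ for $j=1,\dots,p-1$. The hypothesis instead ranges over $j=1,\dots,\frac Np-1$ and uses the quantity $\frac{jpd_l}{N}$, which corresponds to $t=jp$. I would first check carefully, using the block-circulant structure of $\T_K$ from Lemma~\ref{L-trans-block-circ} together with the interlacing, which multiples actually carry nonzero entries. I would then verify that the hypothesis, read with the correct $t$, makes $s=t d_l/N$ an integer: this needs $N/d_l$ to divide $t$, which is implicit in requiring the gcd to be defined. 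If the indices turn out to be $jM$, the same argument goes through verbatim with $s=jMd_l/N=jd_l/p$ in place of $jpd_l/N$. As a sanity check I would specialize to $L$ a subgroup of order $r$, where $P_L$ is divisible by $\Phi_d$ for every $d\mid N$ with $r\nmid \frac Nd$, and confirm that the criterion reduces to the condition $\gcd\!\left(p,\frac Nr\right)=1$ of Theorem~\ref{T-Interlace}.
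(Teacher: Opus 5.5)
Your Step~2 is where the argument fails. The indexing mismatch you flag at the end cannot be resolved by careful checking: with the hypothesis as written, the statement is false. The quantity $\frac{jpd_l}{N}$, for $j=1,\dots,\frac Np-1$, lets Theorem~\ref{T-cyclo} kill exactly the diagonals $\pm jp$, i.e.\ the multiples of $p$. That is also all the paper's own proof obtains before appealing, without justification, to the proof of Theorem~\ref{T-Interlace}, so you are on the paper's route. The diagonals that Lemma~\ref{L-trans-sparsity} leaves alive, however, are the multiples of $M=\frac Np$, and the two families only share multiples of $\operatorname{lcm}(p,M)$.

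The paper's own example is a counterexample. Take $N=36$, $p=9$, $M=4$, $L=\{0,1,2,3\}$, $D=\{2,4\}$. The hypothesis holds, but it only removes the diagonals $\pm 9,\pm 18,\pm 27$. On diagonal $4$ one has $\M_L[0,4]=P_L(\zeta_9^{-1})\neq 0$. For the mutually orthogonal vectors $h_0=e_0+e_1$, $h_1=e_2$, $h_2=e_3$, $h_3=e_4$ in $\C^9$, one gets $\T_K[0,4]=\sum_{n\in\Z_9}h_0[n]\overline{h_0[n+1]}=1$, so $S_\G[0,4]\neq 0$. Your planned sanity check would have exposed this too: for $L$ a subgroup of order $r$, the hypothesis as written is equivalent to $\gcd(\frac Np,\frac Nr)=1$, not to $\gcd(p,\frac Nr)=1$.

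Your fallback is the right re-indexing: require, for each $j=1,\dots,p-1$, some $d_l\in D$ with $p\mid jd_l$ and $\gcd(\frac{jd_l}{p},d_l)=1$. With it, Step~2 goes through verbatim and the subgroup check returns $\gcd(p,\frac Nr)=1$.

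It still rests on Lemma~\ref{L-trans-sparsity}, which is false for merely orthogonal $h_k$. The relation \eqref{e-per-3} determines $\T_K$ from its first block row, not from its top-left $M\times M$ block. The entry at $(a+\alpha M,\,b+\beta M)$ is the lagged cross-correlation $\sum_{n\in\Z_p}h_a[n]\overline{h_b[n+\beta-\alpha]}$, and orthogonality forces this to vanish only when $\beta=\alpha$. For example, $N=6$, $p=3$, $h_0=e_0$, $h_1=e_1$ gives $g=e_0+e_3$ and $\T_K[0,3]=1$, on a diagonal that is not a multiple of $M=2$.

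So a correct version of the corollary needs two changes: your re-indexed hypothesis, and a stronger window condition. For instance, require $h_a$ to be orthogonal to every cyclic shift of $h_b$ whenever $a\neq b$; equivalently, the $\hat h_k$ have pairwise disjoint supports. With both changes in place, your argument is complete.
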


\begin{proof}
By Theorem~\ref{T-cyclo},   the diagonals $\pm p, \pm 2p, \dots, \pm (\frac Np-1)p$   of $S_{\G}$ vanish. The proof of Theorem~\ref{T-Interlace}  in Section 5.3   and the special structure of  the window vector $g$ allow  to conclude that  all  diagonals except the main one  vanish.   
\end{proof}

 The following example shows that   a set $L$  that is not a subgroup can satisfy the  condition described in Corollary~\ref{C-cyclo-Interlace}.

{\it Example:}
Let $p=9$, $N=36$, and $\frac{N}{p}=4$. The set
$
L=\{0,1,2,3\}$ is not a subgroup of $\Z_{36}$.
It is not too difficult to verify that
$ 
\Phi_2(x)=1+x$  and $\Phi_4(x)= x^2+1$ divide  $  P_L(x)= 
  1+x+x^2+x^3.
$
  Hence, the set  
$ 
D=\{2,4\}.
$ is as in \eqref{e-def-D}.

We now verify that for every $j=1,2,3$ there exists $d_j\in D$ such that
$
\gcd\!\left(\frac{jpd_j}{N},\, d_j\right)=1.
$
\begin{itemize}

\item 
For $j=1$, choose $d_1=4$. Then
$
\gcd\!\left(\frac{pd_1}{N},\,4\right)
=
\gcd(1,4)
=
1.
$

\item 
For $j=2$, choose $d_2=2$. Then
$
\gcd\!\left(\frac{2pd_2}{N},\,2\right)
=
\gcd(1,2)
=
1.
$
\item 
For $j=3$, choose $d_3=4$. Then
$
\gcd\!\left(\frac{3pd_3}{N},\,4\right)
=
\gcd(3,4)
=
1.
$
\end{itemize}
Therefore, the set $L$ satisfies the  condition in Corollary~\ref{C-cyclo-Interlace}.

 \section{Proof of the main theorems}
 
 In this section, we prove the  theorems and corollaries introduced in Section 1. Our proofs rely on the structural lemmas established in Section 3.
 
 \subsection{Proof of Theorem \ref{T-Main}}
 
 
 \noindent {\it Case 1: The modulation set $L$ is a subgroup.} \\
 Assume $L$ is a subgroup of $\Z_N$ of order $r$. Since $L$ is generated by $N/r$, every $\ell \in L$ can be written as $\ell= k N/r$, with $k=0, \dots, r-1$.
 We show that the elements of the $\frac{2N}{r} - 1$ diagonals of $\M_L$ labeled with indices $0, \pm r, \dots, \pm (\frac{N}{r}-1) r$ are equal to $r$ and the elements of the other diagonals are zero.
 Indeed, letting $d = j-i$, all elements of the diagonal $D(d)=\{ \M_L [i,j]: j-i=d\}$ equal
 $$ m(d):= \sum_{\ell \in L }\zeta_N^{ d\ell }=\sum_{k=0}^{r-1} e^{-2 \pi i d k/r}. $$
 This is a standard sum of roots of unity, which evaluates to:
 $$ m(d) = \begin{cases}
 	0 & \mbox{if $r$ does not divide $d$} \\
 	r & \mbox{if $r$ divides $d$.}
 \end{cases} $$
 Thus, the set of the non-negative indices of the nonzero diagonals modulo $N$ is $H=\{0, r, \dots, (\frac{N}{r}-1)r\} = \langle r \rangle$, and Lemma \ref{L-main-block} allows us to conclude that $\M_L$ is orthogonally equivalent to a block-diagonal matrix via an explicit permutation matrix $P$.
 Because $S_\G = \M_L \odot \T_K$, the non-zero diagonals of $S_\G$ are restricted to be a subset of the non-zero diagonals of $\M_L$. Therefore, the exact same permutation matrix $P$ block-diagonalizes $S_\G$, meaning $\G$ is block-equivalent.
 
 \medskip
 
 \noindent {\it Case 2: The translation set $K$ is a subgroup.} \\
 Assume $K$ is a subgroup of translations of order $p$. By Lemma \ref{L-trans-block-circ}, the translation matrix $\T_K$ is block-circulant with blocks of size $M \times M$, where $M = N/p$. 
 Because $\M_L[i,j]$ depends solely on the index difference $i-j$, the matrix $\M_L$ is   circulant for any set $L$. Since the Hadamard product of a circulant matrix and a block-circulant matrix results in a block-circulant matrix, this structural property is preserved in the frame operator $S_G = \M_L \odot \T_K$. 
 
 As stated in Section 2.3, a block-circulant matrix is unitarily similar to a block-diagonal matrix via a block-discrete Fourier transform matrix. Therefore, $\G$ is block-equivalent.
 
 \medskip

 The following corollary gives explicit formulas for the blocks arising in the block-diagonalization of the frame operator matrix of $\G$.

\begin{Cor} \label{C-explicit-blocks}
	Let $\G=\G(g,\,L\times K)$ be a finite Gabor system. 
	
	\begin{itemize}
		\item [a)] If  $L$ is a subgroup of $\Z_N$ of order $r$, 
		the frame operator matrix $ S_\G$ is orthogonally equivalent to a block-diagonal matrix consisting of blocks $B_0, \dots, B_{r-1}$.  
		The elements of the blocks $B_s$ are:
		\begin{equation}\label{e-bij} 
			b_{i,j}^{(s)} := S_\G({i r +s,\, j r+s})= r \T_K(i r +s,\, j r+s) 
		\end{equation} 
		where $i,\, j = 0,\, \dots, \frac{N}{r}-1$ and $s=0,\, 1,\, \dots, r-1$.
		
		\item [b)] If  $K$ is a subgroup of $\Z_N$ of order $p$,  
		the frame operator matrix $S_\G$ is block-circulant. Letting $A_0, \dots, A_{p-1}$ denote the $p$ blocks of size $\frac{N}{p} \times \frac{N}{p}$ that form its first block-row, $S_\G$ is unitarily similar to a block-diagonal matrix $\diag(B_0,\, \dots , B_{p-1})$. The blocks $B_s$ are given by: 
		\begin{equation}\label{e-bhat} 
			{B}_s = \sum_{m=0}^{p-1} A_m e^{-2\pi i s m / p}, \quad s = 0, 1, \dots, p-1
		\end{equation}
		
	\end{itemize}
\end{Cor}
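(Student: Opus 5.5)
The plan is to treat the two parts separately, each by combining the Hadamard factorization $S_\G=\M_L\odot\T_K$ of Lemma \ref{L-Hadam} with the structure of one factor.

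\emph{Part a).} Let $L$ be a subgroup of order $r$. By Lemma \ref{L-subgroup-sum}, $\M_L[i,j]=\sum_{\ell\in L}\zeta_N^{\ell(i-j)}$ equals $r$ when $r\mid (i-j)$ and $0$ otherwise. Therefore
$$S_\G[i,j]=r\,\T_K[i,j]\ \text{ if } i\equiv j \pmod r, \qquad S_\G[i,j]=0 \text{ otherwise.}$$
So the nonzero diagonals of $S_\G$ have indices in $r\Z\cap(-N,N)$, a set that is symmetric and contains $0$.

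I would not invoke Lemma \ref{L-main-block} directly, since the gcd there might exceed $r$ when some diagonals of $\T_K$ happen to vanish. Instead I would argue with the explicit permutation $P$ that lists the indices in the order $s, r+s, 2r+s,\dots$ for $s=0,\dots,r-1$. Here $r\mid N$, so residues mod $r$ are well defined on $\Z_N$. Under $P$, the entry $S_\G[i,j]$ vanishes unless $i$ and $j$ lie in the same residue class mod $r$. Hence $P S_\G P^{T}=\diag(B_0,\dots,B_{r-1})$ with $b^{(s)}_{i,j}=S_\G(ir+s,jr+s)=r\,\T_K(ir+s,jr+s)$, which is \eqref{e-bij}. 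Since $P$ is real orthogonal, this gives orthogonal equivalence.

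\emph{Part b).} Let $K$ be a subgroup of order $p$ and set $M=N/p$. Lemma \ref{L-trans-block-circ} gives $\T_K[i+M,j+M]=\T_K[i,j]$. Lemma \ref{L-circulant-prod}(a) shows that $\M_L$ depends only on $i-j$, so it is invariant under the same shift. The Hadamard product $S_\G$ therefore satisfies $S_\G[i+M,j+M]=S_\G[i,j]$. Partitioning $S_\G$ into $p\times p$ blocks of size $M\times M$, this says the $(a,b)$ block depends only on $a-b \bmod p$, so $S_\G$ is block-circulant in the sense of \eqref{e-block-circ}. We then apply Proposition \ref{T-Block-FT} with $m=p$ and $k=M$ to obtain unitary similarity to $\diag(D_0,\dots,D_{p-1})$, where $D_s=\sum_j\zeta_p^{js}C_j$ and $C_j$ is the $j$-th block of the first block column.

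\emph{Main obstacle.} The delicate step is reconciling the indexing conventions. The statement writes $A_m$ for the first block-row with the factor $e^{-2\pi i sm/p}$, whereas Proposition \ref{T-Block-FT} uses the first block column with $\zeta_p^{js}$. In \eqref{e-block-circ} the first row is $(B_0,B_{m-1},\dots,B_1)$, so $A_m=C_{-m}$. Substituting gives
$$D_s=\sum_m \zeta_p^{-ms}A_m,$$
which is exactly \eqref{e-bhat}. I would check this reindexing carefully, together with the sign convention of $\F_p$ versus $\F_N$; any residual relabeling $s\mapsto -s$ only permutes the blocks and does not affect the statement.
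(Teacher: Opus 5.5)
Your proof is correct and follows essentially the paper's route. In a) you use the Hadamard factorization together with the residue-class permutation mod $r$; the paper obtains the same permutation through Lemma \ref{L-main-block}, which in the proof of Theorem \ref{T-Main} is applied to $\M_L$ (whose nonzero diagonals are exactly the multiples of $r$, so the gcd issue you flag does not arise). In b) you use block-circularity of $\T_K$ and $\M_L$ followed by Proposition \ref{T-Block-FT}, and your reindexing $A_m=C_{-m}$, giving $D_s=\sum_m \zeta_p^{-ms}A_m$, correctly supplies the step the paper only asserts.
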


\begin{proof}
	a)  The proof of Theorem \ref{T-Main} establishes that the modulation matrix $\M_L$ is zero everywhere except on diagonals whose indices are multiples of $r$, where the entries evaluate to exactly $r$. 
	Because the frame operator  matrix is defined by the Hadamard product $S = \M_L \odot \T_K$, this element-wise multiplication forces $S_\G$ to inherit the exact same sparsity pattern as $\M_L$. 
	By Lemma \ref{L-main-block}, any matrix with this specific sparsity pattern is block-diagonalized by a permutation matrix, with the elements of the new blocks mapped from the original coordinates $(ir+s, jr+s)$. Since $\M_L$ equals $r$ at these coordinates, the Hadamard product directly yields the formula in \eqref{e-bij}.
	
	b)  The proof of Theorem \ref{T-Main} relies on Lemma \ref{L-trans-block-circ} to establish that the translation matrix $T_K$ is   block-circulant. 
	Because $\M_L$ is a circulant matrix, the Hadamard product $  \M_L \odot \T_K$ preserves this block-circulant structure. 
	As established in Section 2.3, any block-circulant matrix is unitarily similar to a block-diagonal matrix. Applying this standard transformation directly produces  the summation formula shown in \eqref{e-bhat} for each resulting diagonal block.
\end{proof}

\subsection{Proof of Corollary \ref{full-sampling}}

a) When $L=\Z_N$, the elements of the modulation matrix $\M_L$ are given by the standard sum of roots of unity:
$$ \M_L[h, j]= \sum_{\ell=0}^{N-1} e^{2 \pi i\ell(h-j)/N} = \begin{cases}
	0 &  \mbox{if $h \neq j$} 
	\cr  
	N & \mbox{if $h = j$.}
\end{cases}
$$
Thus, $\M_L$ is diagonal and by Lemma \ref{L-Hadam}, the frame operator matrix of $\G$ equals $N\,\diag[ \T_K[0,\,0], \T_K[1,\,1],\, \dots, \T_K[N-1,\, N-1]],$ 
where  $\T_K$ is the translation matrix.
Since $$\T_K[j,j]= \sum_{k \in K} g[j-k] \,\overline{g [j-k]} =\sum_{k \in K} |g[j-k]|^2, $$ the proof of part a) is concluded.

\medskip

b) Assume now that $K=\Z_N$. As shown in \eqref{e-equiv}, applying the discrete Fourier transform matrix to $\G$ yields the unitarily equivalent system $\F_N \G(g, L \times \Z_N) = \G(\hat{g}, \Z_N \times L)$. Because this transformed system possesses a full set of modulations ($\Z_N$), we can apply the result of part a) directly. The frame operator of this transformed system is a diagonal matrix whose $j$-th diagonal entry is exactly $N \sum_{\ell \in L} |\hat{g}[j-\ell]|^2$. 

Because $\F_N$ is unitary, the frame operators of the two systems are unitarily similar, meaning they share the exact same eigenvalues. Therefore, $\G$ is diagonal-equivalent, and its eigenvalues are given explicitly by $\lambda_j = N \sum_{\ell \in L} |\hat{g}[j-\ell]|^2$. The system $\G$ is a frame if and only if all its eigenvalues are strictly positive, yielding the condition $\sum_{\ell \in L} |\hat{g}[j-\ell]|^2 > 0$ for all $j = 0, 1, \dots, N-1$.

\medskip

\noindent \textit{Remark:} In both cases of Corollary \ref{full-sampling} one of the sets constitutes the entire group $\Z_N$ and so the  blocks are  $1 \times 1$. Thus, the frame operator matrix is  diagonal or diagonal-equivalent.

\subsection{Proof of Theorem \ref{T-Interlace}}

The frame operator $S_\G$ is the Hadamard product of the modulation matrix $\M_L$ and the translation matrix $\T_K$.
The modulation matrix $\M_L$ is possibly nonzero only on diagonals that are multiples of $r$ (the order of the modulation subgroup $L$).

The window vector $g$ is constructed by interlacing orthogonal vectors $h_k$.
 Lemma \ref{L-trans-block-circ} and Lemma \ref{L-trans-sparsity} show that this specific construction ensures that the translation matrix $\T_K$ is possibly nonzero only on diagonals whose index is a multiple of $M =\frac Np$, where  $p=|K|$. 

Therefore, the frame matrix $S_\G$ is possibly nonzero only on diagonals that are multiples of both $r$ and $\frac Np$.
This implies that the possible nonzero diagonals are exactly the multiples of $\ell = \text{lcm}\left(r, \frac{N}{p}\right)$, that is: $\{0, \pm \ell, \pm 2\ell, \pm 3\ell, \dots\}$.
For $S_\G$ to be   diagonal, we require that the only possible nonzero diagonal  has index $0$. This occurs if and only if $\ell=N$.

Now, it is a fact that $\text{lcm}(a,b) \cdot \gcd(a,b) = a \cdot b$ and so replacing $a=r$ and $b=\frac{N}{p}$ we obtain:
$$ \text{lcm}\left(r,\frac{N}{p}\right) \cdot \gcd\left(r,\frac{N}{p}\right) = r \cdot \frac{N}{p} $$

We have that $\ell = \text{lcm}\left(r,\frac{N}{p}\right)$ and that $\ell=N$ (for ensuring that the frame matrix is diagonal).
Replacing into above we obtain 
$ N \cdot \gcd\left(r,\frac{N}{p}\right) = r \cdot \frac{N}{p} $,
from which follows that 
$  \gcd\left(r,\frac{N}{p}\right) = \frac{r}{p}. $

Because the greatest common divisor of two integers must itself be an integer, this equation implies that $p$ must divide $r$. Letting $k=\frac{r}{p}$, we have $r=kp$.
Plug into the above: 
$$ \gcd\left(kp,\frac{N}{p}\right) = k $$

Now notice that $\frac{N}{p}=\frac{N}{r} \cdot \frac{r}{p} = \frac{N}{r} \cdot k$ and so 
$  
\gcd\left(kp,\frac{N}{r} k\right) =
k \cdot \gcd\left(p,\frac{N}{r}\right)  = k
$ 
from which 
$ \gcd\left(p,\frac{N}{r}\right) = 1 $ follows.

 \section*{Appendix. Proof of Lemma 3.6}

	We denote with $H$ the subgroup of $\mathbb{Z}_N$ generated by $U$. Assume first that $U=H$.
	If $|H|=r$, from basic linear algebra results it follows that $H$ is the cyclic group generated by $\frac{N}{r}$.
	We use the notation  
	$$\mbox{$ H := \left\langle \frac{N}{r} \right\rangle  =\left\{ k\frac{N}{r}, \ k=0, 1, \dots, r-1\right\}$} .$$  
	
	We prove that there exist an orthogonal matrix $P$ such that $P^* A P$ is block diagonal.
	Define a ``direct link'' relation $\sim_H$ on the set $\{0,\dots,N-1\}$ as follows: 
	$$i \sim_H j \iff i-j \in H$$
	That is, two indices $i$ and $j$ are directly linked if there exists a nonzero element in the diagonal $D_{i-j}$. The relation $\sim_H$ is reflexive (because $0 \in U$) and symmetric (because $d \in U \implies -d \in U$) and it is also transitive because $i \sim_H j$ and $j \sim_H k$ imply $i-j$ and $j-k$ are in $H$, and so the same is true of $i-k$. Thus, $i \sim_H k$, as required.
	
	The relation $\sim_H$ partitions $\Z_N$ into equivalence classes. From basic linear algebra results, it follows that $H$ is the subgroup generated by $\frac{N}{r}$, i.e., 
	$$ H=\left\langle \frac{N}{r}\right\rangle = \left\{ k\frac{N}{r}, \ k=0, 1, \dots, r-1\right\},$$ 
	and that all equivalence classes are in the form of $C_a=H+a$ for $a=0, 1, \dots, r-1$.
	
	For consistency of the notation, we denote the standard basis of $\C^N$ with ${\cal E}= \{e_0, e_1, \dots, e_{N-1}\}$ with the understanding that $e_j$ is the vector whose $j+1$ component is 1 and the other components are zero. Consider the new basis ${\cal E}'$, obtained from ${\cal E}$ by reordering the elements in the following way:
	\begin{align*}
		{\cal E}' = \Bigg\{ 
		& \underbrace{e_0, e_{r}, \dots, e_{(N/r-1)r}}_{\text{group 0}}, \, 
		\underbrace{e_1, e_{1+r}, \dots, e_{1+(N/r-1)r}}_{\text{group 1}}, \, \dots, \\
		& \dots, \, \underbrace{e_{r-1}, e_{2r-1}, \dots, e_{r-1+(N/r-1)r}}_{\text{group } r-1} 
		\Bigg\}
	\end{align*}

	That is, the vectors in each group have indices in one of the equivalence classes. Let $P :\C^N \to \C^N$ be the linear application that maps the $j$-th vector of the basis ${\cal E}$ into the $j$-th vector of the ordered basis ${\cal E}'$. It is easy to verify that $P^{-1}=P^T$, and so the matrix of this application, that we still denote with $P$, is orthogonal.
	
	We now consider the matrix $B:=P^TAP$. Such matrix is obtained from $A$ by first rearranging the columns according to the groups, and then doing the same with the rows. We show that $B$ is block-diagonal. To do this, we show that if the indices $i, j$ belong to different equivalence classes, then $b_{i,j}=0$. Indeed, assume for the sake of contradiction that $b_{i,j} \ne 0$. Thus, the diagonal $D_{i-j}$ is nonzero, i.e., $i-j \in H$. We can conclude that $i \sim_H j$, which contradicts the assumption that $i$ and $j$ belong to different equivalence classes.
	
	We have proved that $B= \diag[B_0, \dots, B_{r-1}]$, where the $B_s$ are $\frac{N}{r} \times \frac{N}{r}$ blocks. The element $(i,j)$ of the block $B_s$ is $ b_{i,j}^{(s)} := a_{ir+s, jr+s}$. Note that if $A$ is a Toeplitz matrix, then $b_{i,j}^{(s)}= b_{i,j}^{(s')}$ for every $s, s' < \frac{N}{r}$ and so all the blocks are the same.
	
	\medskip
	Assume now that $U$ is not a subgroup of $\Z_N$. The relation $\sim_U$ defined in the first part of the proof is still reflexive and symmetric, but not necessarily transitive. To fix this problem, we define the "transitive closure" of $\sim_U$ as follows: 
	Two indices $i, j$ in the set $\{0,\dots,N-1\}$ are {\it chain-linked} in $U$, and we write $i \approx j$, if there exists a sequence $i=p_0, p_1, \dots, p_m=j$ in $\{0,\dots,N-1\}$ such that $p_0=i$, $p_m=j$ and $p_l - p_{l+1} \in U$ for all $0 \leq l < m$.
	
	It is easy to see that $\approx$ is an equivalence relation, which partitions $\Z_N$ into disjoint equivalent classes.
	
	Let $H = \langle U \rangle$ be the subgroup of $\mathbb{Z}_N$ generated by the indices of the nonzero diagonals, and let $C_0$ be the equivalence class containing zero. We show that $C_0=H$, and that $i \approx j$ if and only if $i \sim_H j$.
	
	Indeed, if $k \approx 0$, there exist a sequence $p_0, p_1, \dots, p_m$ such that $p_0=0$, $p_m=k$, and $p_l \sim_U p_{l+1}$ for $0 \leq l < m$. From $p_l \sim_U p_{l+1}$ we get that $p_{l+1}-p_l$ is the index of a nonzero diagonal, so $d_{l+1} := p_{l+1}-p_l \in U$. Thus,
	$$k = p_m = (p_m-p_{m-1}) + (p_{m-1}-p_{m-2}) + \dots + (p_1-p_0)+p_0 = d_m+d_{m-1}+\dots+d_1$$ 
	Each $d_j$ is in $U$, hence $k = d_1+d_2+\dots+d_m \in H$. This argument shows that $C_0 \subset H$; to prove that $H \subset C_0$, we observe that every $h \in H$ can be written as $h=d_1 + \dots + d_m$, where $d_j \in U$. If we let $p_0=0$, $p_1=d_1$, $p_2=d_1+d_2$, $p_3=d_1+d_2+d_3$, $\dots$, $p_m=d_1+d_2+d_3+\dots+d_m=h$, we can see at once that $0 \approx h$ and hence $h \in H$.
	
	\medskip
	The argument used to prove that $H=C_0$ can easily be modified to show that $i \approx j \iff i-j \in H$ - that is, $i \approx j \iff i \sim_H j$.
	
	\medskip
	To conclude the proof, we need to show that $H$ is the subgroup generated by $\ell$, where $\ell = \gcd(N, \{d \in U\})$. 
	
	It is a well-known algebraic fact that the subgroup of $\Z_N$ generated by a set of elements is exactly the subgroup generated by their greatest common divisor with $N$. Indeed, letting $U = \{h_1, \dots, h_m\}$, Bezout's identity guarantees the existence of integers $a_1, \dots, a_m$ and $b$ such that $a_1 h_1 + \dots + a_m h_m + b N = \ell$. Reducing modulo $N$ shows that $\ell \in \langle U \rangle = H$. Conversely, since $\ell$ divides each $h_i$, we have $h_i \in \langle \ell \rangle$ for all $i$, meaning $H \subseteq \langle \ell \rangle$. Thus, $H = \langle \ell \rangle$. 
	
	Because $\ell$ divides $N$, the cyclic subgroup $H$ has order $\frac{N}{\ell}$. As established in the first part of the proof, this implies that $A$ is orthogonally equivalent to a block-diagonal matrix with $\ell$ blocks, each of size $\frac{N}{\ell} \times \frac{N}{\ell}$, and provides the explicit expression for these blocks.

\end{document}